\newtheorem{theorem}{Theorem}[section]
\newtheorem{definition}[theorem]{Definition}
\newtheorem{lemma}[theorem]{Lemma}
\newtheorem{proposition}[theorem]{Proposition}
\newproof{Proof}{Proof}
\numberwithin{equation}{section}
\theoremstyle{definition}
\begin{document}

\begin{frontmatter}



\title{Metric Reduction and Generalized Holomorphic Structures}


\author{Yicao Wang}

\address{Department of Mathematics, Hohai University, Nanjing 210098, China\\}
\ead{yicwang@hhu.edu.cn}

\begin{abstract}
In this paper, metric reduction in generalized geometry is investigated. We show how the Bismut connections on the quotient manifold are obtained from those on the original manifold. The result facilitates the analysis of generalized K$\ddot{a}$hler reduction, which motivates the concept of metric generalized principal bundles and our approach to construct a family of generalized holomorphic line bundles over $\mathbb{C}P^2$ equipped with some non-trivial generalized K$\ddot{a}$hler structures.
\end{abstract}

\begin{keyword}
reduction \sep generalized complex structure \sep generalized K$\ddot{a}$hler structure\sep generalized holomorphic structure \sep principal bundle \sep curvature

\MSC[2008] 53D18\sep 53D05 \sep 53C15
\end{keyword}

\end{frontmatter}


\section{Introduction}

Generalized complex geometry initiated by N. Hitchin and his school is a simultaneous generalization of symplectic geometry and complex geometry. Since Marsden-Weinstein reduction is a basic construction in symplectic geometry, it is natural to explore a generalized version of symplectic reduction in generalized geometry. This topic was treated in great generality in the formalism of Courant reduction in \cite{BCG1}. When furthermore there is a generalized metric on the Courant algebroid to be reduced, it also descends to the reduced Courant algebroid under proper conditions. In \cite{Ca}, this 'metric reduction' was investigated; in particular, this procedure was checked from the angle of geometry of tangent bundles. The present paper arises from our work \cite{Wang} on trying to understand metric reduction from a topological field theoretic viewpoint.

Considerations in generalized geometry are conceptually direct and useful, but the underlying structures often hide in depth and need careful analysis. For example, generalized K$\ddot{a}$hler reduction is easily understood from the general procedure of reduction of Dirac structures, but it contains some sophisticated details from the viewpoint of classical complex geometry. Some of these were included in \cite{Ca}. In this paper, we will carry on this investigation.

We pay much attention on the special case of \emph{isotropic trivially extended $G$-actions} in the sense of \cite{BCG1}, where $G$ is a compact connected Lie group. With an invariant generalized metric in place, the manifold $M$ under consideration carries two horizontal distributions $\tau_\pm$, which are central in our paper. Basically, they are used to express the Bismut connections in the reduced manifold $M_{red}:=M/G$ in terms of Bismut connections in $M$. This is different from the case of reducing the Levi-Civita connection on $M$--In the latter case, a connection of the principal bundle $M\rightarrow M_{red}$ naturally arises from the $G$-invariant metric $g$, i.e. the horizontal distribution is just the orthogonal complement $\mathcal{H}$ of the vertical distribution. The Levi-Civita connection on $M_{red}$ can then be expressed using the Levi-Civita connection on $M$ and the orthogonal projection from $TM$ to $\mathcal{H}$. As for reducing Bismut connections, it is not as directly solved as in the ordinary case and should be motivated by conceptual considerations in generalized geometry. This investigation of reducing Bismut connections is motivated by gauging a zero-dimensional supersymmetric $\sigma$-model in \cite{Wang}.

When the invariant generalized metric is from a generalized K$\ddot{a}$hler manifold $\mathcal{M}$, the situation becomes more interesting. To get a reduced generalized K$\ddot{a}$hler manfold, an invariant submanifold $M\subset \mathcal{M}$ should be carefully chosen and the reduced generalized K$\ddot{a}$hler structure will then sit on $M_{red}=M/G$. Hence $M$ only serves as an intermediate object in this procedure. But in this paper $M$ as a \emph{metric generalized principal bundle} (see \S \ref{mgp}) proves to have its own interest: The curvatures of $\tau_\pm$ are of type $(1,1)$ w.r.t. the reduced complex structures $\tilde{J}_\pm$ on $M_{red}$ respectively. Thus any associated complex vector bundle acquires simultaneously a $\tilde{J}_+$-holomorphic structure and a $\tilde{J}_-$-holomorphic structure.\footnote{Similar phenomenon, of course, occurs in ordinary K$\ddot{a}$hler reduction but is seldom emphasized in the literature.} This motivates our approach to constructing generalized holomorphic vector bundles from generalized K$\ddot{a}$hler reduction.

The paper is organized as follows. In \S~\ref{bas}, we review the basic content of generalized geometry. The goal of \S~\ref{sect3} is to lay the concrete background for later development by investigating the notion of isotropic trivially extended $G$-action in the presence of an invariant generalized metric. Compared with the work in \cite{Ca}, we hardly contain much essentially new content, but our viewpoint is slightly different. In particular, we include some details of the reduced structures which were missing in \cite{Ca}, and emphasize the basic role of the distributions $k_\pm$ (Eq.~(\ref{dec}) is essential for reducing the Bismut connections) which was not explicitly mentioned in \cite{Ca}. In \S~\ref{RBis}, we mainly tackle the problem of expressing the reduced Bismut connections in terms of Bismut connections in the original manifold (Thm.~\ref{Bism}). The curvature of the reduced Bismut connection is also computed in terms of the reduction data (Thm.~\ref{curvature}). These computations play a basic role in \cite{Wang}. The last three sections devote to using generalized K$\ddot{a}$hler reduction to produce generalized holomorphic vector bundles. \S~\ref{mgp} discusses the notion of metric generalized principal $G$-bundle and its associated relative curvature. \S~\ref{GKR} revisits generalized K$\ddot{a}$hler reduction in the spirit of previous sections, and emphasis is put on structures on the intermediate metric generalized principal $G$-bundle, which carries a biholomorphic structure. These two sections pave the way for us to produce generalized holomorphic vector bundles via generalized K$\ddot{a}$hler reduction in \S~\ref{ghs}. We give a sufficient condition for the biholomorphic structure to be generalized holomorphic in the Hamiltonian case. As examples, we have constructed generalized holomorphic line bundles on $\mathbb{C}P^2$ equipped with non-trivial generalized K$\ddot{a}$hler structures.
\section{Basics of generalized geometry}\label{bas}
In this section, we collect the most relevant aspects of generalized geometry. For a detailed account for it, we refer the reader to \cite{Gu00} \cite{Gu0}.

In generalized geometry, one considers geometric structures defined on the generalized tangent bundle $\mathbb{T}M=TM\oplus T^*M$ of a smooth manifold $M$, or more generally on an exact Courant algebroid over $M$.

A Courant algebroid $E$ is a real vector bundle $E$ over $M$, together with an anchor map $\pi$ to $TM$, a non-degenerate inner product and a so-called Courant bracket $[\cdot , \cdot]_c$ on $\Gamma(E)$. These structures should satisfy some compatibility axioms. $E$ is called exact, if the short sequence
\[0\longrightarrow T^*M\stackrel{\pi^*}\longrightarrow E \stackrel{\pi}\longrightarrow TM \longrightarrow0\]
is exact. In this paper, by 'Courant algebroid', we always mean an exact one. Given $E$, one can always find an isotropic right splitting $s:TM\rightarrow E$, which has a curvature form $H\in \Omega_{cl}^3(M)$ defined by
\[H(X,Y,Z)=\langle[s(X),s(Y)]_c,s(Z)\rangle,\quad X, Y, Z\in \Gamma(TM).\]
  By the bundle isomorphism $s+\pi^*:TM\oplus T^*M\rightarrow E$, the Courant algebroid structure can be transported onto $\mathbb{T}M$. Then the inner product $\langle\cdot,\cdot\rangle$ is the natural pairing, i.e.
$\langle X+\xi,Y+\eta\rangle=\xi(Y)+\eta(X)$, and the Courant bracket is
\begin{equation}[X+\xi, Y+\eta]_H=[X,Y]+\mathcal{L}_X\eta-\iota_Yd\xi+\iota_Y\iota_XH,\end{equation}
called the $H$-twisted Courant bracket. Different splittings are related by $B$-field transforms, i.e. $e^B(X+\xi)=X+\xi+\iota_XB$, where $B$ is a 2-form.

A maximal isotropic subbundle $L\subset E$ is called an almost Dirac structure. If $L$ is involutive w.r.t. the Courant bracket, it is called a Dirac structure. These notions can be extended directly to the complexified setting which interests us most.
\begin{definition}
 A generalized complex structure on $E$ is a complex structure $\mathbb{J}$ on $E$ orthogonal w.r.t. the inner product and whose $\sqrt{-1}$-eigenbundle $L\subset E\otimes\mathbb{C}$ is a complex Dirac structure.
\end{definition}
Since $\mathbb{J}$ and its $\sqrt{-1}$-eigenbundle $L$ are equivalent notions, we shall use them interchangeably to denote a generalized complex structure. At a point $x\in M$, the codimension of $\pi(L_x)$ in $T_xM\otimes \mathbb{C}$ is called the type of $\mathbb{J}$ at $x$. Type can vary along some subset of $M$, which makes the local geometry of generalized complex structures rather non-trivial.

A generalized complex structure $L$ is an example of complex Lie algebroids. Via the inner product, $\wedge^\cdot L^*$ can be identified with $\wedge^\cdot \bar{L}$, and we have an elliptic differential complex $(\Gamma(\wedge^\cdot \bar{L}), d_L)$, which induces the Lie algebroid cohomology associated with the Lie algebroid $L$. The differential complex can be twisted by an $L$-module.
\begin{definition}
Given a generalized complex structure $L$ over $M$, an
$L$-connection $\mathrm{D}$ in a complex vector bundle $W$ is a differential operator
$\mathrm{D}:\Gamma(W)\longrightarrow \Gamma(\bar{L}\otimes W)$
satisfying
$$\mathrm{D}(f s)=d_Lf\otimes s+f\mathrm{D}s,\quad s\in \Gamma(W),\ f\in C^\infty(M).$$
If $\mathrm{D}$ is flat, i.e. $\mathrm{D}^2=0$, $\mathrm{D}$ is called a generalized holomorphic structure and $W$ an $L$-module or a generalized holomorphic vector bundle.
\end{definition}
A standard non-trivial example of generalized holomorphic bundles is the canonical line bundle in the pure spinor description of a generalized complex structure. Further analysis and examples of generalized holomorphic structures can be found in \cite{Wang0} \cite{Wang1}.
\begin{definition}A generalized (Riemannian) metric on $E$ is an orthogonal, self-adjoint operator $\mathcal{G}$ such that $\langle \mathcal{G}e,e\rangle > 0$ for nonzero $e\in E$. It is necessary that $\mathcal{G}^2=id$. The $\pm$-eigenbundles $V_\pm$ are positive and negative subbudles of maximal rank respectively.
 \end{definition}
 A generalized metric induces a canonical isotropic splitting: $E=\mathcal{G}(T^*M)\oplus T^*M$. It is called the metric splitting. Given a generalized metric, we shall always choose the metric splitting to identify $E$ with $\mathbb{T}M$. Then $\mathcal{G}$ is of the form $\left(\begin{array}{cc} 0 & g^{-1} \\g & 0 \\
\end{array} \right)$ where $g$ is an ordinary Riemannian metric, and vectors in $V_\pm$ are of the form $X\pm g(X)$ respectively for $X\in TM$.

  If $H$ is the curvature of the metric splitting, sometimes we call the triple $(M, g, H)$ a generalized Riemannian manifold, without explicitly mentioning the underlying Courant algebroid and generalized metric. For a generalized Riemannian manifold $(M, g, H)$, one can define the Bismut connections $\nabla^\pm=\nabla\pm\frac{1}{2}g^{-1}H$, where $\nabla$ is the Levi-Civita connection. It was observed in \cite{Hi} \cite{Gu1} that these connections can be expressed using $H$-twisted Courant bracket:
\begin{equation}[X\mp g(X), Y\pm g(Y)]_H^\pm=\nabla_X^\pm Y\pm g(\nabla_X^\pm Y),\label{Hif}\end{equation}
where $(X+\xi)^\pm$ denote the $V_\pm$-part of $X+\xi\in \Gamma(\mathbb{T}M)$ w.r.t. the decomposition $E=V_+\oplus V_-$.

A generalized metric is an ingredient of a generalized K$\ddot{a}$hler structure, which is the analogue of K$\ddot{a}$hler structure in complex geometry.
\begin{definition}
A generalized K$\ddot{a}$hler structure on $E$ is a pair of commuting generalized complex structures $\mathbb{J}_1$ and $\mathbb{J}_2$ such that $\mathcal{G}=-\mathbb{J}_1 \mathbb{J}_2$ is a generalized metric.
\end{definition}

A generalized K$\ddot{a}$hler structure can also be characterized in terms of more ordinary notions: There are two complex structures $J_\pm$ on $M$ compatible with the Riemannian metric $g$ induced from the generalized metric. Let $\omega_\pm=gJ_\pm$ and $H$ be the curvature of the metric splitting. Then
\begin{equation}d_+^c \omega_+=-d_-^c\omega_-=-H,\end{equation}
where $d_\pm^c$ are the $d^c$-differentials associated to $J_\pm$ respectively. $J_\pm$ is necessarily flat w.r.t. $\nabla^\pm$ respectively and $H$ should be of type $(1,2)+(2,1)$ w.r.t. both $J_+$ and $J_-$. Let $T^\pm_{0,1}M$ be the anti-holomorphic tangent bundles w.r.t. $J_\pm$ respectively. Then we can form two vector bundles over $M$:
\[L_\pm=\{X\pm\sqrt{-1}\omega_\pm(X)|X\in T^\pm_{0,1}M \}.\]
In the metric splitting, $L_1:=L_+\oplus L_-$ and $L_2:=L_+\oplus \bar{L}_-$ are precisely $\sqrt{-1}$-eigenbundles of $\mathbb{J}_1$ and $\mathbb{J}_2$ respectively.

We are particularly interested in generalized holomorphic structures over a generalized K$\ddot{a}$hler manifold. In this setting, we choose $L_1$ to be the underlying generalized complex structure of a generalized holomorphic structure $\mathrm{D}$. Due to the decomposition $L_1=L_+\oplus L_-$, $\mathrm{D}$ can be decomposed as $\mathrm{D}=\bar{\delta}_++\bar{\delta}_-$ accordingly. Actually, $\bar{\delta}_\pm$ are ordinary $J_\pm$-holomorphic structures respectively. Additionally, it is necessary that \begin{equation}\bar{\delta}_+\bar{\delta}_-+\bar{\delta}_-\bar{\delta}_+=0.\label{ghc}\end{equation}
 Conversely, given $J_\pm$-holomorphic structures $\bar{\delta}_\pm$, if Eq.~(\ref{ghc}) is also satisfied, then $\mathrm{D}:=\bar{\delta}_++\bar{\delta}_-$ is a generalized holomorphic structure \cite{Hu}.

\section{Isotropic trivially extended action and metric reduction}\label{sect3}
Though there is a much more general framework in \cite{BCG1} to adapt an ordinary Lie algebra action to the setting of a Courant algebroid, we content ourselves here with the following more restrictive notion of isotropic trivially extended action of a Lie algebra $\mathfrak{g}$. Throughout the paper, we always assume that $\mathfrak{g}$ is the Lie algebra of a compact connected Lie group $G$ acting freely on $M$ from the left. In the following, a Courant algebroid $E$ over $M$ is fixed.

\begin{definition}\label{ext}\cite{BCG1}
Let $\varphi_0: \mathfrak{g}\rightarrow \Gamma(TM)$ be the infinitesimal action of $G$ over $M$. An isotropic trivial extension of this action to $E$ is a bracket-preserving morphism $\varphi:\mathfrak{g}\rightarrow \Gamma(E)$ such that the following diagram
\[\begin{CD}
\mathfrak{g} @> id >> \mathfrak{g} \\
@V\varphi VV      @ V\varphi_0 VV\\
\Gamma(E) @>>\pi> \Gamma(TM)
\end{CD}
\]
is  commutative and the image of $\varphi$ is isotropic pointwise in $E$. If furthermore this extended action integrates to a $G$-action on $E$, we call it an isotropic  trivially extended $G$-action. \end{definition}

Let $e_a$, $a=1,2,\cdots, \textup{dim} \mathfrak{g}$ be a basis of $\mathfrak{g}$ and let $V_a$ be the corresponding fundamental vector fields on $M$. When an isotropic splitting of $E$ is chosen, $\varphi(e_a)=V_a+\xi_a$, where $\xi_{(\cdot)}: \mathfrak{g}\rightarrow \Gamma(T^*M)$ is equivariant and $\iota_a \xi_b+\iota_b\xi_a=0$, where $\iota_a$ denotes contraction with $V_a$. If additionally the splitting is invariant, then its curvature $H$ is invariant and $\iota_a H=d\xi_a$. A remarkable fact in \cite{BCG1} is that $H+\xi_{(\cdot)}$ is actually a closed equivariant 3-form in the Cartan model of equivariant de Rham cohomology.

 Let $K\subset E$ be the subbundle generated by $\varphi(\mathfrak{g})$, and $K^\bot$ its orthogonal complement in $E$ w.r.t. the inner product. Then due to the reduction theory developed in \cite{BCG1}, $E_{red}:=\frac{K^\bot}{K}/G$ has the structure of a Courant algebroid induced from $E$. Now if $\mathcal{G}$ is a $G$-invariant generalized metric over $E$, then $E_{red}$ also acquires a generalized metric $\mathcal{G}_{red}$.

 There is a useful way to describe $\mathcal{G}_{red}$. Let $K^\mathcal{G}$ be the $\mathcal{G}$-orthogonal complement of $K$ in $K^\bot$, i.e. $$K^\mathcal{G}=\mathcal{G}(K^\bot)\cap K^\bot.$$ By projection, $K^\mathcal{G}$ is isomorphism to $K^\bot/K$, and $\mathcal{G}_{red}$ is actually the restriction of $\mathcal{G}$ on the subbundle $K^\mathcal{G}\subset K^\bot$. Accordingly, we have the decomposition
\[K^\mathcal{G}=V_+^{red}\oplus V_-^{red},\] where $V_\pm^{red}=V_\pm \cap K^\mathcal{G}$. Furthermore, by the abovementioned isomorphism, we can regard $K^\mathcal{G}/G$ as a Courant algebroid over $M_{red}$. The advantage of using $K^\mathcal{G}$ instead of $K^\bot/K$ is that, when a lift $\hat{A}\in \Gamma(K^\bot)$ of $A\in \Gamma(E_{red})$ is needed, we can choose $\hat{A}$ to be the unique one in $\Gamma(K^\mathcal{G})$.

Though the Courant algebroid structure of $K^\mathcal{G}/G$ is clear from the generalized reduction procedure, for later convenience, in the following we spell out some details of this structure. We do this mainly at the level of equivariant bundles.

The metric splitting of $\mathcal{G}$ is, of course, invariant and in this splitting the Riemannian mertic $g$ and the curvature $H$ are both invariant. Let $\varphi(e_a)=V_a+\xi_a$ in this splitting. Associated with the isotropic trivially extended $\mathfrak{g}$-action are two horizontal distributions on $M$ \cite{Ca}:
\begin{equation}\tau_\pm:=\{Y\in TM|g(Y, V_a)\pm \xi_a(Y)=0,\quad a=1,2,\cdots, \textup{dim} \mathfrak{g}\}.\label{tau}\end{equation}
They are just distributions derived by projecting $V_\pm^{red}$ to $TM$ and define two connections in the principal $G$-bundle $M\rightarrow M_{red}:=M/G$. They are basic for our later considerations. It is convenient to use $V_a^\pm:=V_a\pm g^{-1}\xi_a$. Denote the bundles generated by $\{V_a^\pm\}$ by $k_\pm\subset TM$ respectively. Then Eq.~(\ref{tau}) can be rephrased as the orthogonal decomposition
\begin{equation}TM=k_\pm\oplus \tau_\pm.\label{dec}\end{equation}

 Let $q: M\rightarrow M_{red}$ be the natural quotient map. Let us first interpret the short exact sequence properly:
\begin{equation}0\longrightarrow q^*(T^*M_{red})\stackrel{[\pi]^*}\longrightarrow K^\mathcal{G} \stackrel{[\pi]}\longrightarrow q^*(TM_{red}) \longrightarrow0,\label{exact}\end{equation}
where $q^*$ means the pull-back of vector bundles by the quotient map $q$, and $[\pi]$ denotes the composition $q_\ast\circ \pi$.
\begin{lemma}\[K^\mathcal{G}=\{Y+\eta\in \mathbb{T}M|g(Y, V_a)+g(\eta, \xi_a)=0, \xi_a(Y)+\eta(V_a)=0\},\]
and
\[\textup{ker}([\pi])=\{Y+\eta\in K^\mathcal{G}|Y\in \pi(K)\}.\]
\end{lemma}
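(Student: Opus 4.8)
The plan is to translate the two defining conditions of $K^{\mathcal{G}}=\mathcal{G}(K^\bot)\cap K^\bot$ into explicit pointwise linear equations on a general element $Y+\eta\in\mathbb{T}M$, working throughout in the metric splitting, in which $\varphi(e_a)=V_a+\xi_a$ and $\mathcal{G}$ acts by $\mathcal{G}(Y+\eta)=g^{-1}\eta+g(Y)$.

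First I would describe $K^\bot$. Since $K$ is generated pointwise by the sections $V_a+\xi_a$ and the inner product is the natural pairing, the membership $Y+\eta\in K^\bot$ is equivalent to $\langle Y+\eta,\,V_a+\xi_a\rangle=\eta(V_a)+\xi_a(Y)=0$ for all $a$, which is precisely the second equation in the asserted description.

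Next I would describe membership in $\mathcal{G}(K^\bot)$. Since $\mathcal{G}^2=\mathrm{id}$, one has $Y+\eta\in\mathcal{G}(K^\bot)$ iff $\mathcal{G}(Y+\eta)\in K^\bot$. Using $\mathcal{G}(Y+\eta)=g^{-1}\eta+g(Y)$ and pairing against $V_a+\xi_a$ gives $g(Y)(V_a)+\xi_a(g^{-1}\eta)=g(Y,V_a)+g(\eta,\xi_a)=0$, where $g(\eta,\xi_a)$ denotes the pairing on one-forms induced by $g$. This is the first equation. Because $K^{\mathcal{G}}$ is the intersection $\mathcal{G}(K^\bot)\cap K^\bot$, imposing both families of equations simultaneously yields the displayed formula.

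Finally, for $\mathrm{ker}([\pi])$ with $[\pi]=q_\ast\circ\pi$, I would use that $\pi(Y+\eta)=Y$ and that $q_\ast\colon TM\to q^*(TM_{red})$ annihilates exactly the vertical directions, i.e. $\mathrm{ker}\,q_\ast=\mathrm{span}\{V_a\}=\pi(K)$. Thus, for $Y+\eta\in K^{\mathcal{G}}$, we have $[\pi](Y+\eta)=q_\ast(Y)=0$ iff $Y\in\pi(K)$, which is the second claim. The argument is purely linear-algebraic and I do not expect a genuine obstacle; the only steps needing care are the identification $\xi_a(g^{-1}\eta)=g(\eta,\xi_a)$ through the induced metric on one-forms, and the use of $\mathcal{G}^2=\mathrm{id}$ to recast $\mathcal{G}(K^\bot)$ as the preimage condition $\mathcal{G}(Y+\eta)\in K^\bot$.
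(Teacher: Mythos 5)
Your proposal is correct and matches the paper's intent exactly: the paper dismisses this lemma as "obvious by definition of $K^\mathcal{G}$," and your argument is precisely the unwinding of that definition — unpacking $K^\perp$ via the natural pairing, using $\mathcal{G}^2=\mathrm{id}$ to rewrite membership in $\mathcal{G}(K^\perp)$ as $\mathcal{G}(Y+\eta)\in K^\perp$, and identifying $\ker q_\ast$ with $\pi(K)$. No gaps; this is the same (routine) approach, just written out in full.
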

\begin{proof}
This is obvious by definition of $K^\mathcal{G}$.
\end{proof}
For $E_{red}$ to be exact, $q^*(T^*M_{red})$ should be identified with $\textup{ker}([\pi])$ via the inner product on $K^\mathcal{G}$. This is realized as follows:
\begin{lemma}\label{ta}Let $T_{ab}:=g(V_a^+, V_b^+)=g(V_a^-, V_b^-)$ and denote its inverse by $T^{ab}$. Then for $\xi\in q^*(T^*M_{red})$,
\[[\pi]^*(\xi)=\xi-T^{ab}g(\xi, \xi_a)(V_b+\xi_b),\]
and the image of $[\pi]^*$ is precisely $\textup{ker}([\pi])$.
\end{lemma}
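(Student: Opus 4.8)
The plan is to recognize $[\pi]^*$ as the adjoint of $[\pi]$ with respect to the inner product on $K^\mathcal{G}$ and the natural pairing on $M_{red}$, so that the proposed element $\Phi(\xi):=\xi-T^{ab}g(\xi,\xi_a)(V_b+\xi_b)$ will be forced to coincide with $[\pi]^*(\xi)$ once I check three things: that $\Phi(\xi)$ genuinely lies in $K^\mathcal{G}$, that it satisfies the defining adjoint identity, and the statement about its image. Essentially all the content sits in membership in $K^\mathcal{G}$: the correction term is engineered precisely to make $\Phi(\xi)$ lie in the $\mathcal{G}$-orthogonal complement of $K$. Before starting I would record two elementary facts. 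Since $\xi\in q^*(T^*M_{red})$ annihilates vertical vectors, $\xi(V_a)=0$; and using $V_a^\pm=V_a\pm g^{-1}\xi_a$ together with the isotropy relation $\iota_a\xi_b+\iota_b\xi_a=0$, the cross terms in $g(V_a^\pm,V_b^\pm)$ cancel, yielding $T_{ab}=g(V_a,V_b)+g(\xi_a,\xi_b)$ independently of the sign. In particular $T_{ab}$ is symmetric, and positive-definiteness of $g$ combined with the decomposition (\ref{dec}) makes it invertible, so $T^{ab}$ is well defined.

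For the adjoint identity, I would write $\Phi(\xi)=\xi-\lambda^b(V_b+\xi_b)$ with $\lambda^b=T^{ab}g(\xi,\xi_a)$, and note that the correction $\lambda^b(V_b+\xi_b)$ lies in $K=\varphi(\mathfrak{g})$. Because every $A\in K^\mathcal{G}$ lies in $K^\bot$, this term pairs trivially with $A$, so $\langle\Phi(\xi),A\rangle=\langle\xi,A\rangle=\xi(\pi A)=\xi([\pi]A)$, the final equality holding since $\xi$ is basic. Hence $\Phi$ agrees with the metric adjoint of $[\pi]$, i.e. with $[\pi]^*$, provided only that $\Phi(\xi)\in K^\mathcal{G}$.

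For the membership, I would test the two conditions of the foregoing Lemma on $\Phi(\xi)=Y+\eta$ with $Y=-\lambda^bV_b$ and $\eta=\xi-\lambda^b\xi_b$. The isotropy condition $\xi_a(Y)+\eta(V_a)=0$ follows directly from $\xi(V_a)=0$ and $\iota_a\xi_b+\iota_b\xi_a=0$. The metric condition $g(Y,V_a)+g(\eta,\xi_a)=0$ unwinds to $g(\xi,\xi_a)=\lambda^b\big(g(V_a,V_b)+g(\xi_a,\xi_b)\big)=\lambda^bT_{ab}$, which holds exactly because $\lambda^b=T^{ab}g(\xi,\xi_a)$ and $T^{ab}T_{bc}=\delta^a_c$. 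This is the step where the precise coefficient is pinned down, and I regard it as the main (though short) obstacle, since it is the only place the metric splitting of $K^\mathcal{G}$ is really used.

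Finally, for the image statement: the tangent part of $\Phi(\xi)$ is $-\lambda^bV_b\in\pi(K)$, so by the foregoing Lemma $\Phi(\xi)\in\ker([\pi])$, giving $\mathrm{im}([\pi]^*)\subseteq\ker([\pi])$. The map $[\pi]^*$ is injective: if $\Phi(\xi)=0$ then both its cotangent part $\xi-\lambda^b\xi_b$ and its tangent part $-\lambda^bV_b$ vanish, and freeness of the action forces $\lambda^b=0$, hence $\xi=0$. Since $q^*(T^*M_{red})$ has rank $\dim M-\dim G$, while $K^\mathcal{G}$ has rank $2(\dim M-\dim G)$ and $[\pi]$ surjects onto $q^*(TM_{red})$ of rank $\dim M-\dim G$, the kernel $\ker([\pi])$ also has rank $\dim M-\dim G$. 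Equality of ranks then upgrades the inclusion to $\mathrm{im}([\pi]^*)=\ker([\pi])$, completing the argument.
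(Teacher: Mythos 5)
Your proof is correct and follows essentially the same route as the paper's: both characterize $[\pi]^*(\xi)$ as the adjoint of $[\pi]$, exploit that the correction term lies in $K$ (hence pairs trivially with $K^\mathcal{G}\subset K^\bot$), pin down the coefficient $T^{ab}g(\xi,\xi_a)$ via the metric condition defining $K^\mathcal{G}$, and conclude $\mathrm{im}([\pi]^*)=\ker([\pi])$ by a dimension count. Your write-up is merely more explicit than the paper's in checking the isotropy condition and the injectivity underlying the rank argument, which the paper leaves implicit.
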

\begin{proof}
First note that
\[g(V_a^+, V_b^+)=g(V_a^-, V_b^-)=g(V_a, V_b)+g(\xi_a,\xi_b)\]
due to the fact $\iota_a\xi_b+\iota_b\xi_a=0$. This implies that the restriction of $g$ on either $\tau_+$ or $\tau_-$ gives rise to the same Riemannian metric on $TM_{red}$, just as required.

For $\xi\in q^*(T^*M_{red})$, $[\pi]^*(\xi)\in K^\mathcal{G}$ is characterized by \[\langle[\pi]^*(\xi), Y+\eta\rangle=\xi([\pi](Y+\eta))=\xi(q_*(Y)), \quad \forall Y+\eta\in K^\mathcal{G}.\] We can assume
$[\pi]^*(\xi)=\xi+f^b(V_b+\xi_b)$ for some constants $f^b$ to be determined. We thus have
\[g(\xi+f^b\xi_b, \xi_a)+g(f^bV_b, V_a)=0,\quad a,b=1,2,\cdots, \textup{dim}\mathfrak{g},\]
i.e.
\[[g(V_a,V_b)+g(\xi_a,\xi_b)]f^b=T_{ab}f^b=-g(\xi, \xi_a),\quad a,b=1,2,\cdots, \textup{dim}\mathfrak{g}.\]
This leads to our expression of $[\pi]^*(\xi)$.

It's obvious that $\textup{Ran}([\pi]^*)\subset \textup{ker}([\pi])$. Thus $\textup{Ran}([\pi]^*)=\textup{ker}([\pi])$ by dimensional reason.
\end{proof}
We have already used the metric splitting of $\mathcal{G}$ to identify $E$ with $\mathbb{T}M$. $\mathcal{G}$, when restricted on $K^\mathcal{G}$, also gives rise to an isotropic splitting $\mathcal{G}(\textup{ker}[\pi])$ of the sequence (\ref{exact}). Then $\mathcal{G}(\textup{ker}[\pi])/G$ is a splitting of $K^\mathcal{G}/G$. Let $Q_{ab}=g(V_a, V_b)$ and $Q^{ab}$ be its inverse.
\begin{proposition}If $E_{red}$ is identified with $K^\mathcal{G}/G$ and $T^*M_{red}$ with $\textup{ker}[\pi]/G$, then $\mathcal{G}(\textup{ker}[\pi])/G$ is the metric splitting of $E_{red}$.
\end{proposition}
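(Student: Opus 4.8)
The plan is to reduce the assertion to the intrinsic description of the metric splitting recalled in \S\ref{bas}: for an exact Courant algebroid equipped with a generalized metric $\mathcal{G}$, the metric splitting is the canonical isotropic splitting whose horizontal subbundle is $\mathcal{G}(T^*M)$, the image of the conormal bundle under $\mathcal{G}$ (this is the content of the decomposition $E=\mathcal{G}(T^*M)\oplus T^*M$). Applying this verbatim to $E_{red}$, I must identify the horizontal subbundle $\mathcal{G}_{red}(T^*M_{red})$ with $\mathcal{G}(\textup{ker}[\pi])/G$, and then confirm that the latter is an honest isotropic splitting of (\ref{exact}) after quotienting by $G$.

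First I would verify that $\mathcal{G}$ preserves $K^\mathcal{G}$, so that the restriction defining $\mathcal{G}_{red}$ is legitimate. This is immediate from $\mathcal{G}^2=id$: since $K^\mathcal{G}=\mathcal{G}(K^\bot)\cap K^\bot$, applying $\mathcal{G}$ yields $\mathcal{G}(K^\mathcal{G})=K^\bot\cap\mathcal{G}(K^\bot)=K^\mathcal{G}$. Together with the $G$-invariance of $\mathcal{G}$, the operator descends to $K^\mathcal{G}/G=E_{red}$, and this descent is exactly $\mathcal{G}_{red}$, as recorded earlier. Under the identification $T^*M_{red}=\textup{ker}[\pi]/G$, the conormal subbundle of $E_{red}$ is $\textup{ker}[\pi]/G$, whence $\mathcal{G}_{red}(T^*M_{red})=\mathcal{G}(\textup{ker}[\pi])/G$; the inclusion $\mathcal{G}(\textup{ker}[\pi])\subset\mathcal{G}(K^\mathcal{G})=K^\mathcal{G}$ guarantees that the right-hand side is a well-defined subbundle of $E_{red}$.

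It remains to confirm that $\mathcal{G}(\textup{ker}[\pi])/G$ is an isotropic splitting, which then forces it to coincide with the metric splitting by the characterization above. Isotropy is inherited: $\textup{ker}[\pi]=\textup{Ran}([\pi]^*)$ is the conormal bundle of the exact algebroid $E_{red}$, hence isotropic, and an orthogonal operator carries isotropic subbundles to isotropic ones. For complementarity with $\textup{ker}[\pi]/G$ I argue pointwise: if $v=\mathcal{G}(w)$ with $v,w\in\textup{ker}[\pi]$, then $\langle\mathcal{G}w,w\rangle=\langle v,w\rangle=0$ by isotropy of $\textup{ker}[\pi]$, so $w=0$ by positivity of $\mathcal{G}$; thus $\mathcal{G}(\textup{ker}[\pi])\cap\textup{ker}[\pi]=0$, and since both summands have rank $\dim M_{red}$ inside $K^\mathcal{G}$ of rank $2\dim M_{red}$, they span. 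Hence $\mathcal{G}(\textup{ker}[\pi])/G$ is the isotropic complement of $T^*M_{red}$ equal to $\mathcal{G}_{red}(T^*M_{red})$, i.e.\ the metric splitting of $E_{red}$.

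The argument is essentially formal once the description of the metric splitting is invoked; the only delicate point is the bookkeeping of the identifications---checking that $\mathcal{G}$ truly restricts to and descends on $K^\mathcal{G}$, and that the conormal inclusion $[\pi]^*$ produced in Lemma~\ref{ta} is the one implicit in the phrase \emph{metric splitting of $E_{red}$}. I anticipate no obstacle beyond this compatibility verification.
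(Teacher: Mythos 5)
Your proposal is correct, but it follows a genuinely different route from the paper's. You invoke the intrinsic characterization of the metric splitting recalled in \S\ref{bas}---its horizontal subbundle is the $\mathcal{G}$-image of the conormal bundle---and reduce everything to the observations that $\mathcal{G}(K^\mathcal{G})=K^\mathcal{G}$ and that $\mathcal{G}_{red}$ is the descent of $\mathcal{G}|_{K^\mathcal{G}}$, so that $\mathcal{G}_{red}(T^*M_{red})=\mathcal{G}(\textup{ker}[\pi])/G$; isotropy and transversality to $\textup{ker}[\pi]/G$ are then formal consequences of the orthogonality and positivity of $\mathcal{G}$, and all steps check out (your rank count and the positivity argument for $\mathcal{G}(\textup{ker}[\pi])\cap\textup{ker}[\pi]=0$ are sound). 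The paper instead uses the other standard characterization: a splitting is the metric splitting iff in it $V_+^{red}/G$ is the graph of the \emph{symmetric} tensor $\tilde{g}$, since any other splitting differs by a nonzero $B$-transform which destroys the symmetry. It verifies this by explicit computation, parametrizing $\mathcal{G}(\textup{ker}[\pi])$ by a third horizontal distribution $\tau$ and a map $Y\mapsto\eta_Y$, writing a typical element of $V_+\cap K^\mathcal{G}$ as $Y+g^{-1}\eta_Y+g(Y)+\eta_Y$, and checking $q^*\tilde{g}([Y])=g(Y)-Q^{ab}\eta_Y(V_b)\xi_a$ on $\tau$. Your argument is shorter and more conceptual, and it makes precise why the paper calls the conclusion obvious; what the paper's computation buys is the concrete machinery ($\tau$, $\eta_Y$, and the explicit form of descended elements) that is reused immediately afterwards to compute the curvature $\tilde{H}$ of this very splitting via the $2$-form $\gamma$, formulas your formal argument does not produce.
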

\begin{proof}The following proof can only be viewed as a detailed analysis of the obvious conclusion. We only need to prove that in this splitting, $V_+^{red}/G$ is the graph of the reduced metric $\tilde{g}$ on $TM_{red}$, since other splittings will involve extra $B$-transforms.

Note that \[\mathcal{G}(\textup{ker}[\pi])=\{Y+\eta\in K^\mathcal{G}|g^{-1}\eta\in \pi(K)\}.\]
The projection of $\mathcal{G}(\textup{ker}[\pi])$ to $TM$ is a third horizontal distribution $\tau$ on $M$ as a principal $G$-bundle. For $Y+\eta\in \mathcal{G}(\textup{ker}[\pi])$, $\eta$ is uniquely determined by $Y$. We thus write $\eta_Y$ instead of $\eta$.

As observed in \cite{Ca}, a typical element in $V_+\cap K^\mathcal{G}$ is of the form
\[A=Y+g^{-1}\eta_Y+g(Y)+\eta_Y,\quad Y\in \tau.\]
Note that $Y+\eta_Y\in \mathcal{G}(\textup{ker}[\pi])$ and $g^{-1}\eta_Y+g(Y)\in \textup{ker}[\pi]$. This means, in the splitting determined by $\mathcal{G}(\textup{ker}[\pi])/G$, $A$ descends to
\[[Y]+g(Y)-Q^{ab}\eta_Y(V_b)\xi_a\in TM_{red}\oplus T^*M_{red},\]
since $g^{-1}\eta_Y=Q^{ab}\eta_Y(V_b)V_a$ and
\[g^{-1}\eta_Y=-Q^{ab}\eta_Y(V_b)\xi_a+Q^{ab}\eta_Y(V_b)(V_a+\xi_a).\]
Note that
\begin{eqnarray*}g(Y,V_c)-Q^{ab}\eta_Y(V_b)\xi_a(V_c)&=&g(Y,V_c)+Q^{ab}\eta_Y(V_b)\xi_c(V_a)
\\&=&g(Y,V_c)+\xi_c(g^{-1}\eta_Y)
\\&=& g(Y,V_c)+g(\eta_Y,\xi_c)\\
&=&0.\end{eqnarray*}
So $g(Y)-Q^{ab}\eta_Y(V_b)\xi_a$ does live in $T^*M_{red}$. Now we only need to check that on $\tau$ we have
\[q^*\tilde{g}([Y])=g(Y)-Q^{ab}\eta_Y(V_b)\xi_a,\quad \forall Y\in \tau.\]
Let $Z\in \tau$. Then on one side by definition of $\tilde{g}$ we have
\begin{eqnarray*}
q^*\tilde{g}([Y])(Z)&=&\tilde{g}([Y],[Z])=g(Y+g^{-1}\eta_Y,Z+g^{-1}\eta_Z))\\&=&g(Y,Z)+\eta_Y(Z)+\eta_Z(Y)+g(\eta_Y,\eta_Z)\\
&=&g(Y,Z)+g(\eta_Y,\eta_Z).
\end{eqnarray*}
On the other side,
\begin{eqnarray*}g(Y,Z)-Q^{ab}\eta_Y(V_b)\xi_a(Z)&=&g(Y,Z)+Q^{ab}\eta_Y(V_b)\eta_Z(V_a)\\
&=&g(Y,Z)+\eta_Z(g^{-1}\eta_Y)\\
&=&g(Y,Z)+g(\eta_Y,\eta_Z).\end{eqnarray*}
Hence the claim follows.
\end{proof}
\noindent\emph{Remark}. $\tau$ is the average of $\tau_\pm$ in the following sense: If $Y$ is the lift of $[Y]$ in $\tau$, then $Y\pm g^{-1}\eta_Y$ are the lifts of $[Y]$ in $\tau_\pm$ respectively.

 Let us compute the curvature $\tilde{H}$ of the metric splitting $\mathcal{G}(\textup{ker}[\pi])/G$. Note that $\eta$ is a map from $\tau$ to $T^*M$. Since $\mathcal{G}(\textup{ker}[\pi])$ is isotropic, we have $\eta_X(Y)+\eta_Y(X)=0$. Additionally, it can be easily obtained that $\eta_X(V_a)=-\xi_a(X)$. We can look for a 2-form $\gamma:TM\rightarrow T^*M$ such that its restriction on $\tau$ is precisely $\eta$. Let $\theta^a$ be the connection form determined by $\tau$. A choice of $\gamma$ is then
\[\gamma=-\frac{1}{2}Q^{ab}[\xi_b-\xi_b(V_c)\theta^c]\wedge [g(V_a)-Q_{ad}\theta^d]-\xi_a\wedge \theta^a+\frac{1}{2}\xi_b(V_a)\theta^a\theta^b.\]
\begin{proposition}If $TM_{red}$ is modeled on $\tau$, then the curvature of the metric splitting of $E_{red}$ is $\tilde{H}=(H+d\gamma)|_\tau$.
\end{proposition}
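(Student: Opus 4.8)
The plan is to reduce the computation of $\tilde{H}$ to a $B$-field transform of the $H$-twisted Courant bracket on $\mathbb{T}M$. By definition, the curvature of the metric splitting $s_{red}:=\mathcal{G}(\textup{ker}[\pi])/G$ of $E_{red}$ is
\[\tilde{H}([X],[Y],[Z])=\langle[s_{red}([X]),s_{red}([Y])]_{red},s_{red}([Z])\rangle,\quad [X],[Y],[Z]\in TM_{red}.\]
First I would fix $G$-invariant horizontal lifts $X,Y,Z\in\Gamma(\tau)$ of $[X],[Y],[Z]$. By the Remark and the description of $\mathcal{G}(\textup{ker}[\pi])$, one has $s_{red}([X])=[X+\eta_X]$ with $X+\eta_X\in\Gamma(K^\mathcal{G})$, and likewise for $Y,Z$. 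Since the reduced bracket is induced from $[\cdot,\cdot]_H$ and $X+\eta_X,Y+\eta_Y$ are invariant sections of $K^\bot$, their bracket again lies in $\Gamma(K^\bot)$; pairing with $Z+\eta_Z\in K^\mathcal{G}\subset K^\bot$ and using $\langle K,K^\bot\rangle=0$ to discard the $K$-component, I obtain the representative identity
\[\tilde{H}([X],[Y],[Z])=\langle[X+\eta_X,Y+\eta_Y]_H,Z+\eta_Z\rangle,\]
the pairing now taken in $\mathbb{T}M$.

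The second step is to recognize the lifts as $B$-field transforms. Because $\gamma$ was chosen so that $\iota_W\gamma=\eta_W$ for every $W\in\tau$ (from the explicit formula one checks that both the first and the last summands annihilate vertical vectors, while $-\xi_a\wedge\theta^a$ reproduces $\eta_W(V_c)=-\xi_c(W)$), we have $X+\eta_X=e^\gamma X$, $Y+\eta_Y=e^\gamma Y$, and $Z+\eta_Z=e^\gamma Z$ along $\tau$. Then I would invoke the standard $B$-transform law $[e^\gamma u,e^\gamma v]_H=e^\gamma[u,v]_{H+d\gamma}$ together with the orthogonality of $e^\gamma$ to get
\[\tilde{H}([X],[Y],[Z])=\langle e^\gamma[X,Y]_{H+d\gamma},e^\gamma Z\rangle=\langle[X,Y]_{H+d\gamma},Z\rangle.\]
Finally, for the pure vector fields $X,Y$ one has $[X,Y]_{H+d\gamma}=[X,Y]+\iota_Y\iota_X(H+d\gamma)$, and pairing the covector part with $Z$ yields $(H+d\gamma)(X,Y,Z)$, while the vector part contributes nothing since $\langle[X,Y],Z\rangle=0$. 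As $X,Y,Z$ range over $\tau$, this is exactly $(H+d\gamma)|_\tau$, as claimed.

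I expect the main obstacle to be the bookkeeping in the first step rather than the $B$-transform itself: one must verify that the reduced Courant bracket is genuinely computed by bracketing the chosen invariant representatives in $K^\mathcal{G}$ and that passing to classes is compatible with the pairing, which is where $\langle K,K^\bot\rangle=0$ and the isotropy of $s_{red}$ enter to guarantee the tensoriality of $\tilde{H}$. A secondary point worth confirming is the \emph{full} covector identity $\iota_W\gamma=\eta_W$ on all of $TM$ for $W\in\tau$, not merely its restriction to $\tau\times\tau$, since the transform step uses $e^\gamma W=W+\eta_W$ as an honest element of $\mathbb{T}M$; the vertical components $\eta_W(V_a)=-\xi_a(W)$ must be matched, which is precisely what the $-\xi_a\wedge\theta^a$ term secures.
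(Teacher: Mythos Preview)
Your argument is correct and begins exactly as the paper does: both proofs establish the representative identity
\[
\tilde H([X],[Y],[Z])=\langle [X+\eta_X,Y+\eta_Y]_H,\,Z+\eta_Z\rangle
\]
for invariant lifts $X,Y,Z\in\Gamma(\tau)$. The divergence is only in how this pairing is evaluated. The paper expands the $H$-twisted bracket by hand, collects the six terms $\eta_Z([X,Y])+\cdots+Z\eta_X(Y)$, and recognizes them as $d\gamma(X,Y,Z)$ using $\gamma|_\tau=\eta$. You instead observe that $X+\eta_X=e^\gamma X$ (and similarly for $Y,Z$) and invoke the standard $B$-transform law $[e^\gamma u,e^\gamma v]_H=e^\gamma[u,v]_{H+d\gamma}$ together with the orthogonality of $e^\gamma$, which compresses that six-term computation into one line. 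Your route is a little more conceptual and makes transparent why the answer has the form $H+d\gamma$; the paper's route is self-contained and avoids appealing to the transform identity. Either way the content is the same, and your caution about needing the \emph{full} identity $\iota_W\gamma=\eta_W$ on $T^*M$ (not just on $\tau^*$) is exactly the point the paper is using when it says $\gamma$ restricts on $\tau$ to $\eta$.
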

\begin{proof}Let $[X]$ here denote a vector field on $M_{red}$ represented by the invariant lift $X\in\Gamma(\tau)$. Then
\begin{eqnarray*}\tilde{H}([X],[Y],[Z])&=&\langle[X+\eta_X, Y+\eta_Y]_H, Z+\eta_Z\rangle\\
&=&\langle[X, Y]+L_X\eta_Y-\iota_Yd\eta_X+\iota_Y\iota_XH, Z+\eta_Z\rangle\\
&=&\eta_Z([X, Y])+\eta_Y([Z,X])+\eta_X([Y,Z])+X\eta_Y(Z)\\&+&Y\eta_Z(X)+Z\eta_X(Y)+H(X,Y,Z)\\
&=&\gamma(Z,[X, Y])+\gamma(Y,[Z,X])+\gamma(X,[Y,Z])\\
&+&X\gamma(Y,Z)+Y\gamma(Z,X)+Z\gamma(X,Y)+H(X,Y,Z)\\
&=&(H+d\gamma)(X,Y,Z),\end{eqnarray*}
as required.
\end{proof}
\noindent\emph{Remark}. The appearance of $\tilde{H}$ depends on which connection among $\tau$, $\tau_\pm$ is used to model $TM_{red}$. In \cite{Ca}, $\tau_+$ is used to do this. In the next section, we will carry out the same computation in a way different from that of \cite{Ca} .

To conclude this section, we clarify some notation for our later use. If $\mathcal{M}$ is a bigger manifold carrying an isotropic trivially extended $G$-action and $M$ is an invariant submanifold of $\mathcal{M}$, the Courant algebroid $\mathcal{E}$ on $\mathcal{M}$ can be directly pulled back to $M$ and the isotropic trivially extended $G$-action also descends. In fact, if $\varphi(e_a)=V_a+\xi_a$ on $\mathcal{M}$ in some splitting, then a natural splitting arises in the pull-back of $\mathcal{E}$ and $\varphi(e_a)=V_a|_M+i^*(\xi_a)$, where $i$ is the inclusion map. By abuse of notation, we will only write $\varphi(e_a)=V_a+\xi_a$ either on $\mathcal{M}$ or on $M$. The ambiguity will be clarified by the context.
\section{Bismut connections in metric reduction}\label{RBis}
The basic context of this section is the same as that of the former one, and we continue to use the notation there. We try to figure out how the Bismut connections $\tilde{\nabla}^\pm$ in $M_{red}$ are reduced from those in $M$. The curvature of $\tilde{\nabla}^-$ is a basic ingredient in \cite{Wang} to interpret metric reduction in the formalism of balanced topological field theories.

Our starting point is Eq.~(\ref{Hif}) where Bismut connections are expressed using Courant bracket. Since by the reduction procedure established in \cite{BCG1}, the Courant algebroid $E_{red}$ on $M_{red}$ can naturally be described in terms of the Courant algebroid $E$ on $M$, one can expect that the Bismut connections on $M_{red}$ could be described in terms of the Courant bracket on $M$. The two connections $\tau_\pm$ play a fundamental role in this investigation.

Note that $\tilde{g}$ is in fact defined by restricting $g$ on $\tau_+$ (or $\tau_-$). This is different from the ordinary case. Let $\tilde{\nabla}^-$ be the $-$-Bismut connection on $M_{red}$ and let $[X]$ denote a vector field on $M_{red}$ represented by an invariant lift $X$ on $M$. $X^\pm$ are used to denote the unique lifts of $[X]$ in $\tau_\pm$ respectively. Let $\varrho_-([X], [Y])$ denote the unique lift of $\tilde{\nabla}_{[X]}^-[Y]$ in $\tau_-$.
\begin{theorem}\label{Bism}$\varrho_-([X], [Y])$ is the projection of $\nabla_{X^+}^-Y^-$ to $\tau_-$ along $k_-$, namely
\begin{equation}\varrho_-([X], [Y])=\nabla_{X^+}^-Y^-+T^{ab}g(Y^-, \nabla_{X^+}^-V^-_b)V^-_a,\label{Bis}\end{equation}
where $T^{ab}$ is the inverse of $T_{ab}=g(V^-_a, V^-_b)$.
\end{theorem}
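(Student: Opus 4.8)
The plan is to start from the characterization of Bismut connections via the Courant bracket, Eq.~(\ref{Hif}), now applied to the reduced Courant algebroid $E_{red}$ realized concretely as $K^\mathcal{G}/G$. The key observation is that the reduced metric $\tilde g$ is obtained by restricting $g$ to $\tau_-$ (equivalently $\tau_+$), and that the negative eigenbundle $V_-^{red}$ is spanned by elements of the form $X^- - g(X^-)$ for lifts $X^-\in\Gamma(\tau_-)$, while the positive eigenbundle $V_+^{red}$ is spanned by $X^+ + g(X^+)$ for lifts $X^+\in\Gamma(\tau_+)$. So I would first write down, for invariant lifts $X,Y$, the two generalized vectors $\widehat{X}_-:=X^+ + g(X^+)\in\Gamma(V_+^{red})$ and $\widehat{Y}_-:=Y^- - g(Y^-)\in\Gamma(V_-^{red})$, whose images under $[\pi]$ are $[X]$ and $[Y]$ respectively, and recognize that Eq.~(\ref{Hif}) on $M_{red}$ reads $[\widehat{X}_-,\widehat{Y}_-]_{\tilde H}^- = \tilde\nabla^-_{[X]}[Y] - \tilde g(\tilde\nabla^-_{[X]}[Y])$, with the bracket taken in $E_{red}$.

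Next I would use that the reduced Courant bracket is computed by lifting to invariant sections of $K^\mathcal{G}\subset\mathbb{T}M$, taking the $H$-twisted Courant bracket on $M$, and projecting back. Because $X^+ - g(X^+)\in V_-$ and $Y^- + g(Y^-)\in V_+$ are the ``wrong-sign'' companions that live in $K$ (they project into $\pi(K)=k_\pm$ suitably), the honest lifts of $\widehat{X}_-,\widehat{Y}_-$ to $K^\mathcal{G}$ agree with the ambient $V_\pm$-vectors $X^+ + g(X^+)$ and $Y^- - g(Y^-)$ up to sections of $K$, which are annihilated upon reduction. The cleanest route is therefore to compute the ambient bracket $[X^+ + g(X^+),\, Y^- - g(Y^-)]_H$ and take its $V_-$-part; by Eq.~(\ref{Hif}) on $M$ this is exactly $\nabla^-_{X^+}Y^- - g(\nabla^-_{X^+}Y^-)$. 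The remaining task is to push this ambient $V_-$-vector down to $E_{red}$, i.e. to project $\nabla^-_{X^+}Y^-\in TM$ orthogonally onto $\tau_-$ along $k_-$ using the decomposition~(\ref{dec}). Since $k_-$ is spanned by $\{V^-_a\}$ with Gram matrix $T_{ab}=g(V^-_a,V^-_b)$, the orthogonal projection onto $\tau_-$ subtracts $T^{ab}g(\nabla^-_{X^+}Y^-,V^-_b)V^-_a$; rewriting $g(\nabla^-_{X^+}Y^-,V^-_b)$ via metric-compatibility of $\nabla^-$ (a Bismut connection preserves $g$) as $-g(Y^-,\nabla^-_{X^+}V^-_b)$ when $g(Y^-,V^-_b)=0$, i.e. using $Y^-\in\tau_-\perp k_-$, yields precisely the stated formula~(\ref{Bis}).

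The main obstacle I anticipate is justifying the compatibility between the abstract reduced Courant bracket on $K^\mathcal{G}/G$ and the ambient bracket computed on $\mathbb{T}M$: I must check that replacing the true $K^\mathcal{G}$-lifts of $\widehat{X}_-,\widehat{Y}_-$ by the simpler $V_\pm$-sections $X^+\pm g(X^+),\,Y^-\mp g(Y^-)$ changes the bracket only by sections of $K$ (plus sections whose $V_-$-part vanishes after reduction), so that the projected result is unaffected. This requires the closure properties $[\Gamma(K),\Gamma(K^\bot)]_H\subset\Gamma(K^\bot)$ and $[\Gamma(K),\Gamma(K^\bot)]_H\subset\Gamma(K)$ from the reduction theory of \cite{BCG1}, together with the isotropy and invariance of the extended action, to guarantee that $K$-valued correction terms drop out under the quotient $K^\mathcal{G}/K^\bot$-projection. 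A secondary technical point is that $Y^-\in\Gamma(\tau_-)$ need not remain in $\tau_-$ after applying $\nabla^-_{X^+}$, which is exactly why the explicit $\tau_-$-projection term appears; I would handle the sign and the passage from $g(\nabla^-_{X^+}Y^-,V^-_b)$ to $-g(Y^-,\nabla^-_{X^+}V^-_b)$ carefully using metric-compatibility and the orthogonality $Y^-\perp k_-$.
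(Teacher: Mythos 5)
Your proposal is correct and takes essentially the same route as the paper: Eq.~(\ref{Hif}) on $M_{red}$, computation of the ambient $H$-twisted bracket of the invariant sections $X^++g(X^+)$, $Y^--g(Y^-)$ of $K^\mathcal{G}$, identification of its $V_-$-part with $\nabla^-_{X^+}Y^--g(\nabla^-_{X^+}Y^-)$ via Eq.~(\ref{Hif}) on $M$, a $\Gamma(K)$-correction accounting for the non-involutivity of $\Gamma^G(K^\mathcal{G})$ (the paper's term $N=2c^a(V_a+\xi_a)$), and finally the projection onto $\tau_-$ along $k_-$ with the sign fixed by metric compatibility of $\nabla^-$ and $g(Y^-,V_b^-)=0$. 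The only inaccuracy is your parenthetical justification for passing to the ambient bracket: $X^++g(X^+)$ and $Y^--g(Y^-)$ already \emph{are} the honest lifts, since they lie in $V_\pm^{red}=V_\pm\cap K^\mathcal{G}$ (whereas $X^+-g(X^+)$ does not lie in $K$, and $\pi(K)$ is the vertical distribution, not $k_\pm$); the genuine point to control is only that the bracket of two invariant sections of $K^\mathcal{G}$ lies in $\Gamma(K^\perp)$ rather than $\Gamma(K^\mathcal{G})$, which is exactly the $K$-correction your last paragraph handles.
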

\begin{proof}According to Eq.~(\ref{Hif}), in the metric splitting of $E_{red}$,
\[\tilde{\nabla}_{[X]}^-[Y]-\tilde{g}(\tilde{\nabla}_{[X]}^-[Y])=[[X]+\tilde{g}([X]), [Y]-\tilde{g}([Y])]_{\tilde{H}}^-.\]
Due to the discussion in \S~\ref{sect3} the R.H.S. of the above equation can be computed using the corresponding invariant sections of $K^\mathcal{G}$, i.e.
\[[X^++g(X^+), Y^--g(Y^-)]_H^-.\]
It should be noted that $\Gamma^G(K^\mathcal{G})$ is not involutive under the Courant bracket. Involutivity can only hold up to addition of invariant section of $K$. Therefore,
\[[X^++g(X^+), Y^--g(Y^-)]_H=A_++A_-+N,\]
where $A_\pm\in \Gamma(V_\pm^{red})$ and $N=2c^a(V_a+\xi_a)$ for some functions $c^a$ to be determined. Of course we want to separate $A_-$ from the above expression because $\varrho_-([X], [Y])=\pi_-(A_-)$ where $\pi_-$ is the projection from $V_-$ to $TM$.

We already have
\[[X^++g(X^+), Y^--g(Y^-)]_H^-=A_-+N_-,\]
where $N_-=c^a(V_a-g(V_a)+\xi_a-g^{-1}\xi_a)$.
Hence,
\[A_-+N_-=\nabla_{X^+}^-Y^--g(\nabla_{X^+}^-Y^-).\]
Therefore,
\[\varrho_-([X], [Y])+c^a(V_a-g^{-1}\xi_a)=\varrho_-([X], [Y])+c^aV^-_a=\nabla_{X^+}^-Y^-.\]
Due to the orthogonal decomposition $TM=\tau_-\oplus k_-$, the above equation means that $\varrho_-([X], [Y])$ is actually the $\tau_-$-part of $\nabla_{X^+}^-Y^-$ w.r.t. this decomposition. We then find
\[c^a=T^{ab}g(\nabla_{X^+}^-Y^-, V_b^-)=-T^{ab}g(Y^-, \nabla_{X^+}^-V^-_b).\]
We finally obtain the formula as required.
\end{proof}
\noindent\emph{Remark}. The result is very similar to the ordinary case except that a different orthogonal decomposition is used. In particular, if $[Z]$ is another vector field on $M_{red}$, then $\tilde{g}(\tilde{\nabla}_{[X]}^-[Y],[Z])=g(\nabla_{X^+}^-Y^-, Z^-)$.

Now we can turn to the problem of expressing the curvature of $\tilde{\nabla}^-$ in terms of that of $\nabla^-$. Let $\theta_\pm^a$ be the connection forms associated to $\tau_\pm$ respectively and let $\Omega_\pm^a$ be the associated curvatures. For later use, we want to express $\Omega_\pm^a$ in terms of $V_a$, $\xi_a$. Let $K_{ab}=g_{ab}-\xi_a(V_b)$ and $K^{ba}$ its inverse, i.e. $K^{bc}K_{ab}=\delta_a^c$.

\begin{lemma}\label{cur}Let $\Omega_\pm^a$ be the curvatures of $\tau_\pm$. Then
\[\Omega_+^a|_{\tau_+}=K^{ba}d\xi^+_b|_{\tau_+},\quad \Omega_-^a|_{\tau_-}=K^{ab}d\xi^-_b|_{\tau_-},\]
where $\xi^\pm_b=g(V_b^\pm)$.
\end{lemma}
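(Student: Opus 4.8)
The plan is to compute the curvature forms $\Omega_\pm^a$ directly from the connection forms $\theta_\pm^a$ via the structure equation $\Omega_\pm^a = d\theta_\pm^a + \tfrac12 c^a_{bc}\theta_\pm^b\wedge\theta_\pm^c$, and then restrict everything to the horizontal distribution $\tau_\pm$, where the quadratic term in the $\theta$'s dies. So the heart of the matter is to identify the connection forms $\theta_\pm^a$ explicitly in terms of the reduction data $V_a,\xi_a$, and in particular to recognize that the horizontal restriction $\Omega_\pm^a|_{\tau_\pm}$ equals the horizontal restriction of $d\theta_\pm^a$.

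First I would write down $\theta_\pm^a$. By definition $\theta_\pm^a$ are the connection forms dual to the fundamental fields with respect to the splitting $TM = k_\pm\oplus\tau_\pm$ of Eq.~(\ref{dec}); that is, $\theta_\pm^a$ annihilate $\tau_\pm$ and reproduce the coefficients of a vector along the basis $\{V_b^\pm\}$ of $k_\pm$. Using the defining equations (\ref{tau}) for $\tau_\pm$, a vector $Y$ lies in $\tau_\pm$ iff $g(Y,V_b)\pm\xi_b(Y)=0$, i.e. iff $(\xi_b^\pm)(Y)=g(Y,V_b)\pm\xi_b(Y)=0$ where $\xi_b^\pm=g(V_b^\pm)=g(V_b)\pm\xi_b$. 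Hence the one-forms $\xi_b^\pm$ vanish exactly on $\tau_\pm$. I would then normalize: since $\theta_\pm^a(V_b^\pm)=\delta^a_b$, and $\xi_b^\pm(V_c^\pm)=g(V_b^\pm,V_c^\pm)=T_{bc}$ but also can be re-expressed through $K_{ab}=g_{ab}-\xi_a(V_b)$ (noting $\xi_b^+(V_c)=g_{bc}+\xi_b(V_c)$ and using $\iota_a\xi_b+\iota_b\xi_a=0$), one finds the precise combination $\theta_+^a=K^{ba}\xi_b^+ \pmod{\text{vertical}}$ and $\theta_-^a=K^{ab}\xi_b^-\pmod{\text{vertical}}$, the index placement distinguishing the two cases through the (generally non-symmetric) matrix $K_{ab}$. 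I would verify these normalizations by pairing against the $V_c^\pm$ and checking $\theta_\pm^a(V_c^\pm)=\delta^a_c$, which is the real bookkeeping step.

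Next I would invoke the structure equation and restrict to $\tau_\pm$. On $\tau_\pm$ every $\theta_\pm^b$ vanishes, so the quadratic bracket term drops and $\Omega_\pm^a|_{\tau_\pm}=d\theta_\pm^a|_{\tau_\pm}$. Since the coefficients $K^{ba}$ are functions on $M$, one has $d\theta_+^a=K^{ba}d\xi_b^+ + dK^{ba}\wedge\xi_b^+$; but $\xi_b^+$ vanishes on $\tau_+$, so the second term vanishes upon restriction, leaving $\Omega_+^a|_{\tau_+}=K^{ba}d\xi_b^+|_{\tau_+}$, and symmetrically $\Omega_-^a|_{\tau_-}=K^{ab}d\xi_b^-|_{\tau_-}$. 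This is exactly the claimed formula.

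The main obstacle I anticipate is pinning down the exact index placement $K^{ba}$ versus $K^{ab}$ and confirming that $K_{ab}$ (rather than $T_{ab}$ or $Q_{ab}$) is the correct normalizing matrix. This hinges on carefully computing $\xi_b^\pm(V_c)=g(V_b,V_c)\pm\xi_b(V_c)=g_{bc}\pm\xi_b(V_c)$ and then using the antisymmetry relation $\xi_b(V_c)+\xi_c(V_b)=0$ to relate $\xi_b^+(V_c)$ to $K_{bc}=g_{bc}-\xi_b(V_c)$ and $\xi_b^-(V_c)$ to $K_{cb}=g_{bc}+\xi_b(V_c)$. Getting the transpose right here is what produces the asymmetric $K^{ba}$ vs.\ $K^{ab}$ in the two formulas; everything else is a routine application of the structure equation and the horizontal restriction.
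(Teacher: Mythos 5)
Your overall route coincides with the paper's: the annihilator of $\tau_\pm$ is spanned by the one-forms $\xi_b^\pm=g(V_b^\pm)$, so the connection forms must be of the shape $\theta_\pm^a=t^{ba}\xi_b^\pm$ for some matrix of functions, and once the coefficients are fixed, the restriction of the curvature to the horizontal distribution is just $d\theta_\pm^a$ restricted there (the quadratic term and the $dt^{ba}\wedge\xi_b^\pm$ term both vanish on horizontal vectors). The paper does exactly this, computing $\Omega_+^a(X^+,Y^+)=d\theta_+^a(X^+,Y^+)=-\theta_+^a([X^+,Y^+])=K^{ba}d\xi_b^+(X^+,Y^+)$.

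However, your normalization --- the step you yourself call ``the real bookkeeping step'' and which is the entire content of the lemma --- is wrong. The connection form of the principal connection determined by $\tau_\pm$ is normalized against the \emph{fundamental vector fields}: $\theta_\pm^a(V_b)=\delta_b^a$, since $\theta_\pm$ must satisfy $\theta_\pm(V_b)=e_b$; this is also what is used downstream, e.g.\ in Thm.~\ref{curvature}, where $\Omega_+^a(X^+,Y^+)V_a$ is the vertical component (along the $V_a$) of $-[X^+,Y^+]$. It is \emph{not} normalized against the basis $\{V_b^\pm\}$ of $k_\pm$: the decomposition (\ref{dec}) is $g$-orthogonal, but $k_\pm$ is not the vertical distribution, and $\theta_\pm^a(V_b^\pm)=\delta_b^a$ fails in general. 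Concretely, imposing your condition $\theta_+^a(V_c^+)=\delta_c^a$ on $t^{ba}\xi_b^+$ gives $t^{ba}T_{bc}=\delta_c^a$, i.e.\ $t^{ab}=T^{ab}$ --- the wrong matrix --- so carried out literally your argument yields $\Omega_\pm^a|_{\tau_\pm}=T^{ab}d\xi_b^\pm|_{\tau_\pm}$, not the lemma. Conversely, the correct form $\theta_+^a=K^{ba}\xi_b^+$ does not pass the check you propose: $K^{ba}\xi_b^+(V_c^+)=K^{ba}T_{bc}=\delta_c^a$ would force $g(\xi_b,\xi_c)=\xi_b(V_c)$ for all $b,c$, impossible unless all $\xi_a=0$, since the left side is symmetric and the right side antisymmetric in $b,c$. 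The correct bookkeeping is: $\xi_b^+(V_c)=g_{bc}+\xi_b(V_c)=K_{cb}$ and $\xi_b^-(V_c)=g_{bc}-\xi_b(V_c)=K_{bc}$, whence $\theta_\pm^a(V_c)=\delta_c^a$ gives $\theta_+^a=K^{ba}\xi_b^+$ and $\theta_-^a=K^{ab}\xi_b^-$. (Note that your final paragraph pairs these the other way around, relating $\xi_b^+(V_c)$ to $K_{bc}$; taken literally this transposes the indices and contradicts the formulas you state in the middle of your argument.)
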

\begin{proof}We only compute $\Omega_+^a$. The computation for $\Omega_-^a$ is similar.
Note that
\[\theta_+=\theta_+^ae_a=t^{ba}g(V_b^+)e_a,\]
where $t^{ba}$ is to be determined. We have
\[t^{ba}g(V_b^+,V_c)=t^{ba}K_{cb}=\delta^a_c.\]
 Then $t^{ba}$ is precisely $K^{ba}$ and $\theta_+^a=K^{ba}g(V_b^+)$. Then
\begin{eqnarray*}
\Omega_+^a(X^+, Y^+)&=&d\theta_+^a(X^+, Y^+)=X^+\theta_+^a(Y^+)-Y^+\theta_+^a(X^+)-\theta_+^a([X^+, Y^+])\\
&=&-\theta_+^a([X^+, Y^+])=-K^{ba}(V_b^+,[X^+, Y^+])\\
&=&K^{ba}(d\xi^+_b)(X^+, Y^+).
\end{eqnarray*}
\end{proof}
Let $R^-$ and $\tilde{R}^-$ be the curvatures of $\nabla^-$ and $\tilde{\nabla}^-$ respectively. We have
\begin{theorem}\label{curvature}The curvature $\tilde{R}^-$ of $\tilde{\nabla}^-$ is
\begin{eqnarray*}&\quad&\tilde{g}(\tilde{R}^-([X], [Y])[Z], [W])=g(R^-(X^+, Y^+)Z^-, W^-)\\
&-&\frac{1}{2}K^{ab}(d\xi^+_a)(X^+, Y^+)(d\xi^-_b)(Z^-, W^-)\\&+&T^{ab}[g(Z^-, \nabla_{Y^+}^-V^-_a)g(W^-, \nabla_{X^+}^-V^-_b)-(X\leftrightarrow Y)],
\end{eqnarray*}
where $(X\leftrightarrow Y)$ denotes a term similar to the term in front of it, only with $X$ and $Y$ exchanged.
\end{theorem}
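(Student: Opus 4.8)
The plan is to expand $\tilde R^-$ from the definition of curvature, pair everything against $[W]$ with $\tilde g$, and convert each piece into data on $M$ by means of the remark following Thm.~\ref{Bism}, namely $\tilde g(\tilde\nabla^-_{[X]}[Y],[Z])=g(\nabla^-_{X^+}Y^-,Z^-)$, together with the explicit lift formula (\ref{Bis}). Writing $\tilde R^-([X],[Y])[Z]=\tilde\nabla^-_{[X]}\tilde\nabla^-_{[Y]}[Z]-\tilde\nabla^-_{[Y]}\tilde\nabla^-_{[X]}[Z]-\tilde\nabla^-_{[[X],[Y]]}[Z]$, the first summand becomes $g(\nabla^-_{X^+}\varrho_-([Y],[Z]),W^-)$, since $\varrho_-([Y],[Z])$ is by definition the invariant $\tau_-$-lift of $\tilde\nabla^-_{[Y]}[Z]$ and may be fed into the remark in place of $Y^-$.

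Next I would substitute (\ref{Bis}) for $\varrho_-([Y],[Z])$ and apply $\nabla^-_{X^+}$ by Leibniz. The crucial simplification is that when $\nabla^-_{X^+}$ hits the scalar coefficient $T^{ab}g(Z^-,\nabla^-_{Y^+}V_b^-)$, the resulting term is proportional to $g(V_a^-,W^-)$, which vanishes because $W^-\in\tau_-=(k_-)^\perp$; only the term in which $\nabla^-_{X^+}$ differentiates $V_a^-$ survives. This produces $g(\nabla^-_{X^+}\nabla^-_{Y^+}Z^-,W^-)$ together with precisely the $T^{ab}$-correction of the statement; subtracting the $[X]\leftrightarrow[Y]$ term handles the second summand and antisymmetrises the correction.

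For the third summand I would use that $\tau_+$ is a principal connection: if $X^+,Y^+\in\Gamma(\tau_+)$ are the invariant horizontal lifts, then the $\tau_+$-lift of $[[X],[Y]]$ is $[X^+,Y^+]+\Omega_+^a(X^+,Y^+)V_a$, the vertical defect being measured by the curvature $\Omega_+$ (here by the fundamental fields $V_a$, since the bundle's vertical distribution is $\mathrm{span}\{V_a\}$, not $k_+$). Inserting this into the remark, the three second-order pieces assemble into $g(R^-(X^+,Y^+)Z^-,W^-)$, leaving the extra term $-\Omega_+^a(X^+,Y^+)\,g(\nabla^-_{V_a}Z^-,W^-)$.

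The main obstacle is identifying this last term with the $K^{ab}$-term of the statement. I expect to establish the key identity
\[ g(\nabla^-_{V_a}Z^-,W^-)=\tfrac12(d\xi^-_a)(Z^-,W^-),\qquad Z^-,W^-\in\tau_-, \]
by expanding $g(\nabla_{V_a}Z^-,W^-)$ with the Koszul formula, using that the invariant lifts satisfy $[V_a,Z^-]=[V_a,W^-]=0$ and $g(V_a,W^-)=\xi_a(W^-)$ for $W^-\in\tau_-$, and rewriting the Bismut correction through $H(V_a,\cdot,\cdot)=\iota_aH=d\xi_a$; the $d\xi_a$ contributions then cancel, leaving $g(\nabla^-_{V_a}Z^-,W^-)=-\tfrac12 g([Z^-,W^-],V_a^-)$, which equals $\tfrac12(d\xi^-_a)(Z^-,W^-)$ because $\xi^-_a=g(V_a^-)$ annihilates $\tau_-$. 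Combining this with $\Omega_+^a=K^{ba}d\xi^+_b$ from Lemma~\ref{cur}, and relabelling $a\leftrightarrow b$ via $T^{ab}=T^{ba}$, delivers the stated formula. The careful bookkeeping of which complement ($k_\pm$ versus the vertical $\mathrm{span}\{V_a\}$) is used at each stage, and the sign in the bracket--curvature relation, are the points demanding the most attention.
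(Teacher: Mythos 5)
Your proposal is correct and takes essentially the same route as the paper: everything is pushed down to $M$ via Theorem~\ref{Bism} and its remark, the bracket term is handled through the $\tau_+$-curvature and Lemma~\ref{cur}, and the computation closes with the identity $g(\nabla^-_{V_a}Z^-,W^-)=\tfrac{1}{2}(d\xi^-_a)(Z^-,W^-)$, which is exactly Eq.~(\ref{3form}) of the paper. The only (equivalent) variations are that you generate the second-order terms by applying Leibniz to the lift formula~(\ref{Bis}) instead of using metric compatibility, and you derive the key identity from the Koszul formula rather than the paper's Killing-field argument; both verifications check out.
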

\begin{proof} Since $\tilde{\nabla}^-$ and $\nabla^-$ are metric connections, we have
\begin{eqnarray*}
&\quad&\tilde{g}(\tilde{\nabla}_{[X]}^-\tilde{\nabla}_{[Y]}^-[Z],[W])=[X]\tilde{g}(\tilde{\nabla}_{[Y]}^- [Z], [W])-\tilde{g}(\tilde{\nabla}_{[Y]}^-[Z], \tilde{\nabla}_{[X]}^-[W])\\&=&X^+g(\nabla^-_{Y^+}Z^-, W^-)\\
&-&g(\nabla^-_{Y^+}Z^-+T^{ab}g(Z^-,\nabla_{Y^+}^-V^-_b)V^-_a,\nabla^-_{X^+}W^-+T^{cd}g(W^-,\nabla_{X^+}^-V^-_d)V^-_c)\\
&=& g(\nabla_{X^+}^-\nabla_{Y^+}^-Z^-, W^-)-T^{ab}g(Z^-, \nabla_{Y^+}^-V^-_a)g(W^-, \nabla_{X^+}^-V^-_b)\\
&-&2T^{ab}g(Z^-,\nabla_{Y^+}^-V^-_b)g(V^-_a, \nabla^-_{X^+}W^-)\\
&=&g(\nabla_{X^+}^-\nabla_{Y^+}^-Z^-, W^-)+T^{ab}g(Z^-, \nabla_{Y^+}^-V^-_a)g(W^-, \nabla_{X^+}^-V^-_b),
\end{eqnarray*}
where Eq.~(\ref{Bis}) is used.

Additionally,
\begin{eqnarray*}\tilde{g}(\tilde{\nabla}_{[[X],[Y]]}^-[Z], [W])&=&g(\nabla^-_{[X^+, Y^+]+\Omega_+(X^+, Y^+)}Z^-, W^-)\\
&=&g(\nabla^-_{[X^+, Y^+]}Z^-, W^-)+g(\nabla^-_{\Omega_+(X^+, Y^+)}Z^-, W^-),\end{eqnarray*}
where we have used the identity\footnote{$\widetilde{[X]}$ denotes the horizontal lift of $[X]$ in $\tau_+$.}
\[[X^+, Y^+]-\widetilde{[[X], [Y]]}=-\Omega_+(X^+, Y^+)=-\Omega^a_+(X^+, Y^+)V_a,\]
and $\Omega^a_+$ is the curvature of $\tau_+$.

By Lemma.~\ref{cur}, we have
\[g(\nabla_{\Omega_+(X^+, Y^+)}^-Z^-, W^-)=K^{ba}(d\xi^+_b)(X^+, Y^+)g(\nabla_{V_a}^-Z^-, W^-).\]
Note that \begin{eqnarray*}g(\nabla_{V_a}^-Z^-, W^-)&=&g(\nabla_{V_a}Z^-,W^-)-\frac{1}{2}H(V_a, Z^-, W^-)\\
&=&g(\nabla_{V_a}Z^-,W^-)-\frac{1}{2}(d\xi_a)(Z^-, W^-),\end{eqnarray*}
\begin{eqnarray*}g(\nabla_{V_a}Z^-,W^-)&=&g(\nabla_{Z^-}V_a, W^-)=Z^-(g(V_a)(W^-))-g(V_a, \nabla_{Z^-}W^-)\\
&=&Z^-(g(V_a)(W^-))-g(V_a, \nabla_{W^-}Z^-)-g(V_a)([Z^-, W^-])\\
&=&Z^-(g(V_a)(W^-))-W^-(g(V_a)(Z^-))+g(\nabla_{W^-}V_a, Z^-)\\&-&g(V_a)([Z^-, W^-])\\
&=&dg(V_a)(Z^-, W^-)+g(\nabla_{W^-}V_a, Z^-),
\end{eqnarray*}
and
\[g(\nabla_{V_a}Z^-,W^-)+g(\nabla_{W^-}V_a, Z^-)=0.\]
Then we have
\begin{equation}g(\nabla_{V_a}^-Z^-, W^-)=\frac{1}{2}d\xi^-_a(Z^-, W^-).\label{3form}\end{equation}
Combining all the above ingredients together, we come to the conclusion.
\end{proof}

Similar formulae for $\tilde{\nabla}^+$ hold, but we just write down the counterpart of Eq.~(\ref{Bis}) for $\tilde{\nabla}^+$ for later use. The detail is left to the interested reader. Let $\varrho_+([X],[Y])$ be the $\tau_+$-lift of $\tilde{\nabla}^+_{[X]}[Y]$. Then
\begin{equation}\varrho_+([X],[Y])=\nabla^+_{X^-}Y^++T^{ab}g(Y^+, \nabla^+_{X^-}V_b^+)V_a^+.\label{bis}\end{equation}

As an application of our formula (\ref{bis}), we use it to compute the curvature $\tilde{H}$ of the reduced metric splitting again. 
\begin{proposition}\label{plus}The curvature $\tilde{H}$ of the reduced metric splitting is $(H+\Omega_+^a\wedge \xi_a)|_{\tau_+}$.
\end{proposition}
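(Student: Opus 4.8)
The plan is to recover $\tilde H$ as the torsion $3$-form of the reduced Bismut connection $\tilde\nabla^+$ and then feed in the lifting formula (\ref{bis}). Since $\tilde\nabla^+=\tilde\nabla+\tfrac12\tilde g^{-1}\tilde H$ with $\tilde\nabla$ the Levi-Civita connection of $\tilde g$ and $\tilde\nabla$ torsion-free, the torsion tensor of $\tilde\nabla^+$ is $\tilde g^{-1}\tilde H(\cdot,\cdot,-)$, so that
\[\tilde H([X],[Y],[Z])=\tilde g\big(\tilde\nabla^+_{[X]}[Y]-\tilde\nabla^+_{[Y]}[X]-[[X],[Y]],\ [Z]\big).\]
The three terms on the right are then evaluated separately using the machinery of \S\ref{sect3} and \S\ref{RBis}, modelling $TM_{red}$ on $\tau_+$ so that $\tilde g([U],[W])=g(U^+,W^+)$.

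First I would pair $\varrho_+([X],[Y])$ from (\ref{bis}) with $Z^+$. Because the correction term $T^{ab}g(Y^+,\nabla^+_{X^-}V^+_b)V^+_a$ lies in $k_+$ and $Z^+\in\tau_+$, the orthogonal decomposition (\ref{dec}) kills it, giving $\tilde g(\tilde\nabla^+_{[X]}[Y],[Z])=g(\nabla^+_{X^-}Y^+,Z^+)$. For the bracket term I would use the identity $[X^+,Y^+]-\widetilde{[[X],[Y]]}=-\Omega^a_+(X^+,Y^+)V_a$ already exploited in the proof of Thm.~\ref{curvature}, together with $g(V_a,Z^+)=-\xi_a(Z^+)$, which is just the defining relation (\ref{tau}) of $\tau_+$; this yields $\tilde g([[X],[Y]],[Z])=g([X^+,Y^+],Z^+)-\Omega^a_+(X^+,Y^+)\xi_a(Z^+)$.

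The subtle point is that (\ref{bis}) differentiates the $\tau_+$-lift $Y^+$ along the $\tau_-$-lift $X^-$, so I would split $X^-=X^++v_X$ with $v_X:=X^--X^+$ vertical, and similarly for $Y$. The diagonal pieces recombine, via the torsion identity $g(\nabla^+_UW-\nabla^+_WU-[U,W],Z)=H(U,W,Z)$ for $\nabla^+$ on $M$, into $H(X^+,Y^+,Z^+)+g([X^+,Y^+],Z^+)$; the bracket contributions then cancel against the one produced above, leaving $H(X^+,Y^+,Z^+)+\Omega^a_+(X^+,Y^+)\xi_a(Z^+)$. Everything therefore hinges on the leftover off-diagonal term $g(\nabla^+_{v_X}Y^+-\nabla^+_{v_Y}X^+,Z^+)$, which must reproduce the remaining two cyclic summands $\Omega^a_+(Y^+,Z^+)\xi_a(X^+)+\Omega^a_+(Z^+,X^+)\xi_a(Y^+)$ of $(\Omega^a_+\wedge\xi_a)(X^+,Y^+,Z^+)$.

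Resolving this off-diagonal term is the main obstacle, and it is handled by two explicit computations. First, writing $v_X=\mu_X^aV_a$, the defining relations of $\tau_\pm$ force $\mu_X^aK_{ca}=2\xi_c(X^+)$, hence $v_X=2K^{ac}\xi_c(X^+)V_a$ in the notation $K_{ab}=g_{ab}-\xi_a(V_b)$ of Lemma~\ref{cur}. Second, the $+$-analogue of (\ref{3form}), namely $g(\nabla^+_{V_a}Y^+,Z^+)=\tfrac12 d\xi^+_a(Y^+,Z^+)$ for $Y^+,Z^+\in\tau_+$, is proved exactly as (\ref{3form}) using invariance of the lifts and $g(V_a,\cdot)|_{\tau_+}=-\xi_a|_{\tau_+}$. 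Substituting these two facts and invoking $\Omega^a_+|_{\tau_+}=K^{ba}d\xi^+_b|_{\tau_+}$ from Lemma~\ref{cur}, the index contractions collapse the off-diagonal term precisely to the two missing cyclic terms, and assembling all pieces gives $\tilde H=(H+\Omega^a_+\wedge\xi_a)|_{\tau_+}$.
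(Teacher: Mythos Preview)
Your proposal is correct and follows essentially the same route as the paper's own proof: both start from the torsion identity for $\tilde\nabla^+$, feed in (\ref{bis}) and the horizontal-lift identity for $[[X],[Y]]$, split the $\tau_-$-lift as $X^-=X^+ +\,(\text{vertical})$ (your $v_X$ is the paper's $-\varsigma(X^+)$), and then reduce the vertical correction to the missing cyclic terms via the $+$-analogue of (\ref{3form}) together with Lemma~\ref{cur}. The only differences are notational and in the level of detail you supply for the orthogonality step and the inversion $\mu_X^aK_{ca}=2\xi_c(X^+)$.
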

\begin{proof}
Since $\tilde{H}$ is the torsion of $\tilde{\nabla}^+$, we have
\begin{eqnarray*}\tilde{H}([X],[Y],[Z])&=&\tilde{g}(\tilde{\nabla}^+_{[X]}[Y], [Z])-\tilde{g}(\tilde{\nabla}^+_{[Y]}[X], [Z])-\tilde{g}([[X],[Y]],[Z])\\
&=&g(\nabla^+_{X^-}Y^+, Z^+)-g(\nabla^+_{Y^-}X^+, Z^+)\\&-&g([X^+, Y^+]+\Omega_+^a(X^+, Y^+)V_a, Z^+).\end{eqnarray*}
Since $X^-$ is uniquely determined by $X^+$, define $\varsigma(X^+)=X^+-X^-$. By definition, $g(X^+-\varsigma(X^+), V_a^-)=0$,
i.e.
\[g(V_b, V_a^-)\varsigma^b(X^+)=g(X^+, V_a^-)=-2\xi_a(X^+).\]
Since $g(V_b, V_a^-)=K_{ab}$, we find $\varsigma(X^+)=-2K^{ab}\xi_b(X^+)V_a$,
and
\begin{eqnarray*}g(\nabla^+_{\varsigma(X^+)}Y^+, Z^+)&=&-2K^{ab}\xi_b(X^+)g(\nabla^+_{V_a}Y^+, Z^+)\\
&=&-K^{ab}\xi_b(X^+)d\xi_a^+(Y^+,Z^+)\\
&=&-\Omega_+^b(Y^+,Z^+)\xi_b(X^+),
\end{eqnarray*}
where we have used a counterpart of Eq.~(\ref{3form}) for $\nabla^+$ and Lemma.~\ref{cur}. Thus we obtain
\begin{eqnarray*}
\tilde{H}([X],[Y],[Z])&=&H(X^+,Y^+,Z^+)-g(\nabla^+_{\varsigma(X^+)}Y^+, Z^+)\\&+&g(\nabla^+_{\varsigma(Y^+)}X^+, Z^+)+\Omega_+^a(X^+, Y^+)\xi_a(Z^+)\\
&=&H(X^+,Y^+,Z^+)+\Omega_+^b(Y^+,Z^+)\xi_b(X^+)\\
&-&\Omega_+^b(X^+,Z^+)\xi_b(Y^+)+\Omega_+^b(X^+,Y^+)\xi_b(Z^+)\\
&=&(H+\Omega_+^b\wedge \xi_b)(X^+,Y^+,Z^+),
\end{eqnarray*}
which recovers the result in \cite{Ca}.
\end{proof}
As a conclusion, we briefly discuss the metric reduction from a bigger manifold $\mathcal{M}$ to a submanifold $M$. The Courant algebroid $\mathcal{E}$ over $\mathcal{M}$ and the generalized metric $\mathcal{G}$ can be directly pulled back to $M$. This situation can be treated in the same spirit as before but is much simplified. If $M$ is locally defined by $\sigma^\alpha=0$, $\alpha=1,2,\cdots, \textup{dim} \mathcal{M}-\textup{dim} M$, one only needs to use $\{d\sigma^\alpha\}$ to generate the bundle $K$ on $M$. Then $K^\bot$ and $K^\mathcal{G}$ can be similarly defined. The metric splitting of $\mathcal{G}$ directly gives rise to the metric splitting on the reduced Courant algebroid. The reduced metric $\tilde{g}$ is just the restriction of $g$ on $TM$ and the curvature $\tilde{H}$ on $M$ is just the pull-back of $H$ on $\mathcal{M}$ by the inclusion map.

We still use $\tilde{\nabla}^-$ to denote the reduced $-$-Bismut connection. Let $G^{\alpha\beta}=g(d\sigma^\alpha, d\sigma^\beta)|_M$ and $G_{\alpha\beta}$ be its inverse.
\begin{proposition}Let $\bar{X}, \bar{Y}$ be vector fields on $M$, and $X$, $Y$ be their arbitrary extensions to $\mathcal{M}$. Then
\begin{equation}\tilde{\nabla}^-_{\bar{X}}\bar{Y}=\nabla_X^-Y|_M+G_{\alpha\beta}(Y, \nabla_X^- d\sigma^\beta)|_M(g^{-1}d\sigma^\alpha)|_M.\label{sbis2}\end{equation}
\end{proposition}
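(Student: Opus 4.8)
The plan is to read this as the degenerate special case of the whole framework of \S\ref{sect3}--\S\ref{RBis} in which the bundle $K$ is generated not by a group action but by the exact one-forms $\{d\sigma^\alpha\}$. Concretely I would install the dictionary $V_a\rightsquigarrow 0$, $\xi_a\rightsquigarrow d\sigma^\alpha$ in every formula of the previous two sections. Then $V_a^\pm=V_a\pm g^{-1}\xi_a\rightsquigarrow\pm g^{-1}d\sigma^\alpha$, so both bundles $k_\pm$ of Eq.~(\ref{dec}) become the normal bundle $NM=\mathrm{span}\{g^{-1}d\sigma^\alpha\}$, while the horizontal distributions $\tau_\pm$ of Eq.~(\ref{tau}) both degenerate to $TM$ itself and the orthogonal decomposition (\ref{dec}) becomes $T\mathcal{M}|_M=TM\oplus NM$. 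Likewise $T_{ab}=g(V_a^-,V_b^-)\rightsquigarrow g(d\sigma^\alpha,d\sigma^\beta)|_M=G^{\alpha\beta}$, so its inverse $T^{ab}\rightsquigarrow G_{\alpha\beta}$. As already noted before the statement, $\tilde g=g|_{TM}$ and the ambient metric splitting restricts to the reduced one, so all the hypotheses behind Thm.~\ref{Bism} are in force.

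With this dictionary it suffices to specialize Eq.~(\ref{Bis}) (equivalently, to rerun the Courant-bracket derivation of Thm.~\ref{Bism} verbatim). Choosing arbitrary extensions $X,Y$ of $\bar X,\bar Y$ to $\mathcal{M}$, the lifts $X^\pm,Y^\pm$ all collapse to $X,Y$ and $\varrho_-$ becomes $\tilde\nabla^-_{\bar X}\bar Y$ itself because $\tau_\pm=TM$, so the leading term $\nabla^-_{X^+}Y^-$ becomes $\nabla^-_XY|_M$. In the correction term $T^{ab}g(Y^-,\nabla^-_{X^+}V_b^-)V_a^-$ I would substitute $V_b^-,V_a^-\rightsquigarrow -g^{-1}d\sigma^\beta,-g^{-1}d\sigma^\alpha$; the two signs cancel, and since $\nabla^-$ is a metric connection it commutes with the musical isomorphism, giving $g(\bar Y,\nabla^-_X g^{-1}d\sigma^\beta)=g(\bar Y,g^{-1}\nabla^-_X d\sigma^\beta)=(\nabla^-_X d\sigma^\beta)(\bar Y)$. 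This turns the correction into $G_{\alpha\beta}(Y,\nabla^-_X d\sigma^\beta)|_M(g^{-1}d\sigma^\alpha)|_M$ and yields (\ref{sbis2}).

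The only genuinely non-formal ingredient is the conversion of the vector-valued pairing into the covector-valued one $(\nabla^-_X d\sigma^\beta)(Y)$, and it is worth recording its geometric meaning, which I regard as the main obstacle. By Thm.~\ref{Bism} with $\tau_-=TM$, $k_-=NM$, the connection $\tilde\nabla^-_{\bar X}\bar Y$ is exactly the $g$-orthogonal projection of $\nabla^-_XY|_M$ onto $TM$ along $NM$; subtracting the normal part would naively produce $G_{\alpha\beta}\,g(\nabla^-_XY,g^{-1}d\sigma^\beta)|_M\,(g^{-1}d\sigma^\alpha)|_M$. To match (\ref{sbis2}) one must flip this to the second-fundamental-form datum $\nabla^-_X d\sigma^\beta$, using the Leibniz rule $X\big(d\sigma^\beta(Y)\big)=(\nabla^-_X d\sigma^\beta)(Y)+d\sigma^\beta(\nabla^-_X Y)$ together with the two facts that $d\sigma^\beta(Y)|_M=0$ (as $\bar Y\in TM$) and that its derivative along $X$ vanishes on $M$ (as $\bar X\in TM$); hence $d\sigma^\beta(\nabla^-_XY)|_M=-(\nabla^-_X d\sigma^\beta)(Y)|_M$. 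As a consistency check, both terms on the right of (\ref{sbis2}) are then manifestly independent of the chosen extensions of $\bar X,\bar Y$, as they must be since the left-hand side is intrinsic.
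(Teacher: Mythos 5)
Your proposal is correct and takes essentially the same route as the paper: the paper's proof simply observes that $\tilde{\nabla}^-_{\bar{X}}\bar{Y}$ is the projection of $\nabla^-_XY|_M$ onto $TM$ along the normal bundle $Q=\mathrm{span}\{g^{-1}d\sigma^\alpha|_M\}$ (i.e.\ the degenerate case of Thm.~\ref{Bism} with $\tau_\pm=TM$, $k_\pm=Q$) and declares this "enough to lead to the conclusion." Your dictionary $V_a\rightsquigarrow 0$, $\xi_a\rightsquigarrow d\sigma^\alpha$ and the Leibniz-rule conversion $d\sigma^\beta(\nabla^-_XY)|_M=-(\nabla^-_Xd\sigma^\beta)(Y)|_M$ just make explicit the specialization and the final step the paper leaves implicit.
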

\begin{proof}We have an orthogonal decomposition $T\mathcal{M}|_M=TM\oplus Q$, where $Q$ is the normal bundle of $M$ in $\mathcal{M}$ and is locally generated by $\{g^{-1}d\sigma^\alpha|_M\}$. It is easy to check that $\tilde{\nabla}^-_{\bar{X}}\bar{Y}$ is just the projection of $\nabla_X^-Y|_M$ to $TM$ along $Q$. This is enough to lead to the conclusion.
\end{proof}

As for the curvature of $\tilde{\nabla}^-$, we have
\begin{proposition}The curvature of $\tilde{\nabla}^-$ is
\begin{eqnarray*}\bar{g}(\tilde{R}^-(\bar{X}, \bar{Y})\bar{Z}, \bar{W})&=&g(R^-(X, Y)Z, W)|_M\\&+&G_{\alpha\beta}[(Z, \nabla_Y^- d\sigma^\beta)(W, \nabla_X^- d\sigma^\alpha)-(X\leftrightarrow Y)]|_M,\end{eqnarray*}
where $(X\leftrightarrow Y)$ still denotes a term similar to the term in front of it, only with $X$ and $Y$ exchanged.
\end{proposition}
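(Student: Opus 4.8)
The plan is to mirror exactly the argument used for the preceding curvature formula (Theorem~\ref{curvature}), but now in the simpler setting of submanifold reduction where the connection data is encoded by the functions $\sigma^\alpha$ rather than by the fundamental fields $V_a$ and the one-forms $\xi_a$. The essential simplification is that the horizontal distributions $\tau_\pm$ and the third distribution $\tau$ all collapse to a single object, namely $TM\subset T\mathcal{M}|_M$, so the orthogonal decomposition $T\mathcal{M}|_M=TM\oplus Q$ replaces the pair of decompositions $TM=k_\pm\oplus\tau_\pm$. Correspondingly, the metric $\tilde H$-correction term that appeared via Lemma~\ref{cur} disappears here, because $\tilde H$ is simply the pull-back $i^*H$ and no connection-form curvature $\Omega_\pm^a$ enters; this is why the final formula has only two terms instead of three.

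\textbf{Key steps.} First I would expand $\bar g\bigl(\tilde\nabla^-_{\bar X}\tilde\nabla^-_{\bar Y}\bar Z,\bar W\bigr)$ using the metric property of $\tilde\nabla^-$ and substitute the reduction formula~(\ref{sbis2}) for $\tilde\nabla^-_{\bar Y}\bar Z$. Writing $N^\alpha:=g^{-1}d\sigma^\alpha|_M$ for the normal generators, the inner nabla becomes $\nabla^-_Y Z|_M+G_{\alpha\beta}(Z,\nabla^-_Y d\sigma^\beta)N^\alpha$, and applying $\tilde\nabla^-_{\bar X}$ and pairing with $\bar W$ reproduces, term by term, the two computations in the proof of Theorem~\ref{curvature}: one term gives the ambient double covariant derivative $g(\nabla^-_X\nabla^-_Y Z,W)|_M$, and the cross terms assemble into $G_{\alpha\beta}(Z,\nabla^-_Y d\sigma^\beta)(W,\nabla^-_X d\sigma^\alpha)|_M$ after the cancellation of the doubled term exactly as before. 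The $[\bar X,\bar Y]$ contribution is where the present setting is genuinely easier: since $TM$ is integrable (it is the tangent bundle of a submanifold, not merely a horizontal distribution), the Lie bracket $[\bar X,\bar Y]$ lifts to $[X,Y]|_M$ with no curvature defect $\Omega$ to account for, so $\bar g\bigl(\tilde\nabla^-_{[\bar X,\bar Y]}\bar Z,\bar W\bigr)=g(\nabla^-_{[X,Y]}Z,W)|_M$ directly.

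\textbf{Assembling the curvature.} Forming $\tilde R^-=\tilde\nabla^-\tilde\nabla^--\tilde\nabla^-\tilde\nabla^--\tilde\nabla^-_{[\cdot,\cdot]}$ and using that the ambient pieces recombine into $g(R^-(X,Y)Z,W)|_M$, the surviving correction is the antisymmetrization in $X,Y$ of the cross term, which is precisely the stated $G_{\alpha\beta}$-expression with the $(X\leftrightarrow Y)$ subtraction.

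\textbf{Main obstacle.} The one point requiring care—the analogue of Eq.~(\ref{3form}) in the earlier proof—is verifying that the terms involving $\nabla^-_{\,\cdot}N^\alpha$ paired against a \emph{tangential} vector reproduce a derivative of $d\sigma^\alpha$ rather than leaving an uncontrolled normal-bundle second-fundamental-form contribution. The resolution is that $\nabla^-$ is metric, so $g(\nabla^-_Z N^\alpha, W)=-g(N^\alpha,\nabla^-_Z W)$ can be rewritten via $g(N^\alpha,\cdot)=d\sigma^\alpha$ on $M$ and the defining relation $d\sigma^\beta(TM)=0$; this lets every normal-derivative term be converted into the symmetric quantity $(Z,\nabla^-_{\,\cdot}d\sigma^\beta)$ and ensures the doubled cross term cancels exactly, just as the identity $g(\nabla_{V_a}Z^-,W^-)+g(\nabla_{W^-}V_a,Z^-)=0$ did in Theorem~\ref{curvature}. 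I expect this bookkeeping of normal versus tangential components to be the only genuinely delicate point; everything else transcribes verbatim from the earlier proof with $V_a^\pm,\xi_a,T^{ab}$ replaced by $d\sigma^\alpha,G_{\alpha\beta}$.
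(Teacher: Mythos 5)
Your proposal is correct and is exactly the argument the paper intends: the paper in fact gives no proof of this proposition (it is "left to the interested reader" with a pointer to \cite{Wang}, after the remark that the submanifold case is "the same spirit as before but much simplified"), and what you write is precisely the intended simplification of the proof of Theorem~\ref{curvature}. Your three key ingredients — expanding $\bar{g}(\tilde{\nabla}^-_{\bar{X}}\tilde{\nabla}^-_{\bar{Y}}\bar{Z},\bar{W})$ with the metric property and Eq.~(\ref{sbis2}), cancelling the doubled cross term by combining metricity of $\nabla^-$ with the tangency relation $d\sigma^\alpha(W)|_M=0$ (this, rather than Eq.~(\ref{3form}) which handled the now-absent $\Omega_+$ contribution, is the true analogue of the flip $g(V^-_a,\nabla^-_{X^+}W^-)=-g(\nabla^-_{X^+}V^-_a,W^-)$), and the observation that integrability of $TM$ removes any curvature defect from the $[\bar{X},\bar{Y}]$ term — are each correct and together yield the stated formula.
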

\begin{proof}
We leave the proof to the interested reader. A detailed argument can also be found in \cite{Wang}.
\end{proof}
\section{Metric generalized principal $G$-bundles and relative curvatures}\label{mgp}
In this section, motivated by former observations and also for later use, we investigate generalized principal $G$-bundles in the presence of an invariant generalized metric.

The notion of generalized principal bundles was introduced in \cite{Wang1} to define generalized holomorphic structures in the setting of principal bundles.

\begin{definition} A generalized principal $G$-bundle over $M$ is a triple $(P, E, \varphi)$ such that

\indent (i) $p:P\rightarrow M$ is an ordinary principal $G$-bundle,

\indent (ii) $E$ is a Courant algebroid over $P$ and $\varphi$ is an isotropic trivially extended $G$-action on $E$.
\end{definition}
In contrast with Def.~\ref{ext}, the notion of generalized principal bundles hardly contains any essentially new points, but emphasizes a different aspect of the same object. So $E$ descends to $M$ in the same way as before. In the following, we additionally assume that there is a $G$-invariant generalized metric $\mathcal{G}$ on $E$ and call $P$ a \emph{metric generalized principal bundle}. Then the two connections $\tau_\pm$ again arise. Let $\tilde{\nabla}^\pm$ denote the Bismut connections in the base manifold $M$.

\begin{definition}Let $X, Y$ be vector fields on $M$, and $X^+$, $Y^-$ be their lifts in $\tau_\pm$ respectively. The relative curvature of the pair $(\tau_+, \tau_-)$ is
\[R(X,Y)=(\tilde{\nabla}^-_XY)^--(\tilde{\nabla}^+_{Y}X)^+-[X^+, Y^-],\]
where $(\tilde{\nabla}^-_XY)^-$ is the $\tau_--$lift of $\tilde{\nabla}^-_XY$ and $(\tilde{\nabla}^+_{Y}X)^+$ is the $\tau_+$-lift of $\tilde{\nabla}^+_{Y}X$.
\end{definition}
It is not hard to check that $R$ is tensorial and takes values in the vertical distribution. There is a vector-bundle version of the notion of relative curvature.
\begin{definition}On a generalized Riemannian manifold $(M, g, H)$, if a vector bundle $W$ is equipped with two connections $\nabla^\pm$, then the \emph{relative curvature} of the pair $(\nabla^+, \nabla^-)$ is defined as
\[R(X,Y)s=\nabla^+_X\nabla^-_Ys-\nabla^-_Y\nabla^+_Xs-\nabla^-_{\tilde{\nabla}^-_XY}s+\nabla^+_{\tilde{\nabla}^+_YX}s, \quad \forall s\in \Gamma(W),\]
where $\tilde{\nabla}^\pm$ are the Bismut connections in the base manifold $M$.
\end{definition}
\noindent \emph{Remark}. It can be recognized that the relative curvature for a vector bundle is actually part of the curvature of a generalized connection defined in \cite{Gu1}: In the formula for the latter, simply by letting the two arguments take values in $V_+$ and $V_-$ respectively (recall that $V_\pm$ are the eigenbundles of the generalized metric $\mathcal{G}$), one recovers a relative curvature.

If $\rho:G\rightarrow \textup{End}(W_0)$ is a representation of $G$ in a vector space $W_0$, then $\tau_\pm$ in the metric generalized principal $G$-bundle $P$ give rise to two connections $\nabla^\pm$ in the associated vector bundle $W_0\times_\rho P$. It should be pointed out that since by our convention $G$ acts on $P$ from the left, $G$ should act on $W_0$ from the right; in particular, $\rho_*([v, w])=-[\rho_*(v), \rho_*(w)]$ for $v, w\in \mathfrak{g}$.
\begin{proposition}
If $R^aV_a$ is the relative curvature of the pair $(\tau_+, \tau_-)$ in the metric generalized principal $G$-bundle $P$, then $R^a\rho_\ast(e_a)$ is the relative curvature of the pair $(\nabla^+, \nabla^-)$ in the associated vector bundle $W_0\times_\rho P$.
\end{proposition}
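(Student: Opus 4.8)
The plan is to use the standard dictionary between sections of the associated bundle $W:=W_0\times_\rho P$ and $G$-equivariant $W_0$-valued functions on $P$, under which the two connections $\nabla^\pm$ become directional derivatives along the horizontal lifts supplied by $\tau_\pm$. Concretely, I would represent a section $s\in\Gamma(W)$ by an equivariant function $f:P\rightarrow W_0$; then, since $\nabla^\pm$ are the connections induced by the principal connections $\tau_\pm$, one has $\nabla^+_X s\leftrightarrow X^+(f)$ and $\nabla^-_Y s\leftrightarrow Y^-(f)$, where $X^+$, $Y^-$ are the horizontal lifts in $\tau_+$, $\tau_-$ respectively. These lifts are $G$-invariant vector fields, hence preserve equivariance, so iterated derivatives such as $X^+(Y^-(f))$ again represent sections of $W$.

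The first substantive step is to insert this dictionary into the definition of the relative curvature of the pair $(\nabla^+,\nabla^-)$. The two second-order terms combine into a Lie bracket, $X^+(Y^-(f))-Y^-(X^+(f))=[X^+,Y^-](f)$, while the two first-order terms contribute $-(\tilde{\nabla}^-_XY)^-(f)$ and $+(\tilde{\nabla}^+_YX)^+(f)$, the lifts being taken in $\tau_-$ and $\tau_+$ as dictated by the connection applied. Thus the relative curvature of $(\nabla^+,\nabla^-)$ is represented by the equivariant function
\[\left([X^+,Y^-]-(\tilde{\nabla}^-_XY)^-+(\tilde{\nabla}^+_YX)^+\right)(f).\]
Comparing with the definition $R(X,Y)=(\tilde{\nabla}^-_XY)^--(\tilde{\nabla}^+_YX)^+-[X^+,Y^-]$ of the relative curvature of the pair $(\tau_+,\tau_-)$, this is exactly $-R(X,Y)(f)=-R^a\,V_a(f)$.

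The final step converts the action of the vertical field $V_a$ on $f$ into the representation. Because $f$ is equivariant for the left $G$-action on $P$ together with the right action $\rho$ on $W_0$, differentiating the equivariance relation along $\exp(te_a)$ gives $V_a(f)=-\rho_\ast(e_a)f$; this is precisely where the convention that $G$ acts on $P$ from the left and on $W_0$ from the right (so that $\rho_\ast([v,w])=-[\rho_\ast(v),\rho_\ast(w)]$) enters. Substituting yields $-R^a V_a(f)=R^a\rho_\ast(e_a)f$, which is the equivariant function representing $R^a\rho_\ast(e_a)\,s$, and the identification of the relative curvature of $(\nabla^+,\nabla^-)$ with $R^a\rho_\ast(e_a)$ follows.

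I expect the computation itself to collapse cleanly, so the real care lies in the bookkeeping: matching each $\nabla^\pm$ to the correct $\tau_\pm$-lift, and—crucially—tracking the sign in $V_a(f)=-\rho_\ast(e_a)f$, since an error there would produce $-R^a\rho_\ast(e_a)$ rather than the asserted $R^a\rho_\ast(e_a)$. A secondary point worth verifying is that each intermediate expression is genuinely equivariant, so that it represents a section; this follows from the $G$-invariance of the horizontal lifts and is routine given the principal-connection origin of $\nabla^\pm$.
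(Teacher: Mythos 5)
Your proposal is correct, and it takes a genuinely different route from the paper's. The paper argues locally: it trivializes $P\cong G\times M$, writes $X^+=X-\theta_+^a(X)V_a$, $Y^-=Y-\theta_-^a(Y)V_a$, expands the relative curvature of $(\tau_+,\tau_-)$ in terms of the connection forms and structure constants (using the torsion identity $\tilde{\nabla}^-_XY-\tilde{\nabla}^+_YX=[X,Y]$ to cancel the non-tensorial part), then separately computes the relative curvature of $(\nabla^+,\nabla^-)$ on the local frame $s$ via $\nabla^\pm s=s\rho_\ast(\theta_\pm(\cdot))$, and finally matches the two expressions, the quadratic terms being identified through $\rho_\ast([v,w])=-[\rho_\ast(v),\rho_\ast(w)]$. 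You instead work globally through the dictionary between sections of $W_0\times_\rho P$ and equivariant $W_0$-valued functions, under which the four terms of the vector-bundle relative curvature assemble at once into $-\bigl((\tilde{\nabla}^-_XY)^--(\tilde{\nabla}^+_YX)^+-[X^+,Y^-]\bigr)(f)=-R^aV_a(f)$, and the infinitesimal equivariance relation converts this to $R^a\rho_\ast(e_a)$ applied to the section. Your route avoids the trivialization, the structure-constant bookkeeping, and the torsion identity altogether, and it makes clear \emph{why} the proposition holds: the two notions of relative curvature are literally the same operator read through the associated-bundle dictionary; the paper's computation, while heavier, produces as a byproduct the explicit local formula for $R^a$ in terms of $\theta_\pm$. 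One small point of care: with the paper's convention that $G$ acts on $P$ from the left and on $W_0$ from the right, the equivariance relation is $f(gp)=f(p)\rho(g^{-1})$, so the infinitesimal relation is $V_a(f)=-f\rho_\ast(e_a)$ with $\rho_\ast(e_a)$ acting on the right; your $V_a(f)=-\rho_\ast(e_a)f$ should be read in that sense (this right action is also what makes the anti-homomorphism property consistent), but the sign, which is the only thing that affects the conclusion, is exactly as you have it.
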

\begin{proof}
Since the computation is essentially local, we can safely assume $P$ is of the form $G\times M$. Let $\theta_\pm$ be the connection form of $\tau_\pm$ respectively. Then
\[X^+=X-\theta^a_+(X)V_a,\quad Y^-=Y-\theta_-^a(Y)V_a.\]
Note that
\begin{eqnarray*}
\tilde{\nabla}^-_XY-\tilde{\nabla}^+_YX&=&\tilde{\nabla}_XY-\tilde{\nabla}_YX
-\frac{1}{2}g^{-1}\tilde{H}(X,Y)-\frac{1}{2}g^{-1}\tilde{H}(Y,X)\\
&=&\tilde{\nabla}_XY-\tilde{\nabla}_YX\\
&=&[X,Y],
\end{eqnarray*}
where $\tilde{\nabla}$ is the Levi-Civita connection on $M$ and $\tilde{H}$ the curvature of the reduced metric splitting.
Thus the relative curvature of the pair $(\tau_+, \tau_-)$ is
\begin{eqnarray*}R(X,Y)&=&[-\theta_-^a(\tilde{\nabla}^-_XY)+\theta_+^a(\tilde{\nabla}^+_{Y}X)]V_a
+[X, \theta_-^b(Y)V_b]\\&+&[\theta_+^b(X)V_b, Y]-[\theta_+^a(X)V_a, \theta_-^b(Y)V_b]\\
&=&[X\theta_-^a(Y)-Y\theta_+^a(X)-\theta_-^a(\tilde{\nabla}^-_XY)+\theta_+^a(\tilde{\nabla}^+_{Y}X)]V_a\\
&-&\theta_+^c(X)\theta_-^b(Y)f_{cb}^aV_a.\end{eqnarray*}

Let $s$ be the frame of $W_0\times_\rho P$ corresponding to the trivialization of $P$. We have
\[\nabla^+_X\nabla^-_Ys=\nabla^+_X(s\rho_*(\theta_-(Y)))=s(\rho_*(X\theta_-(Y)))+s\rho_*(\theta_+(X))\rho_*(\theta_-(Y)),\]
and
\[\nabla^-_Y\nabla^+_Xs=\nabla^-_Y(s\rho_*(\theta_+(X)))=s(\rho_*(Y\theta_+(X)))+s\rho_*(\theta_-(Y))\rho_*(\theta_+(X)).\]
Therefore, the relative curvature of the pair $(\nabla^+, \nabla^-)$ is
\begin{eqnarray*}
R(X,Y)&=&\rho_*(X\theta_-(Y))-\rho_*(Y\theta_+(X))-\rho_*(\theta_-(\tilde{\nabla}^-_XY))+\rho_*(\theta_+(\tilde{\nabla}^+_YX))\\
&+&\rho_*(\theta_+(X))\rho_*(\theta_-(Y))-\rho_*(\theta_-(Y))\rho_*(\theta_+(X))\\
&=&\rho_*(X\theta_-(Y))-\rho_*(Y\theta_+(X))-\rho_*(\theta_-(\tilde{\nabla}^-_XY))+\rho_*(\theta_+(\tilde{\nabla}^+_YX))\\
&-&\rho_*([\theta_+(X),\theta_-(Y)]).
\end{eqnarray*}
The claim then follows.
\end{proof}
Now let us go back to the context of \S~\ref{RBis} and view $M$ as a metric generalized principal $G$-bundle over $M_{red}$. We want to derive a formula for the relative curvature of the pair $(\tau_+, \tau_-)$ in terms of the data of the isotropic trivially extended $G$-action.

By Eq.~(\ref{Bis}) and Eq.~(\ref{bis}),
\[\varrho_-([X],[Y])=\nabla^-_{X^+}Y^-+T^{ab}g(Y^-,\nabla^-_{X^+}V_b^-)V_a^-,\]
and
\[\varrho_+([Y], [X])=\nabla^+_{Y^-}X^++T^{ab}g(X^+,\nabla^+_{Y^-}V_b^+)V_a^+.\]
Therefore, the relative curvature is
\begin{eqnarray*}R([X],[Y])&=&T^{ab}[g(Y^-,\nabla^-_{X^+}V_b^-)V_a^--g(X^+,\nabla^+_{Y^-}V_b^+)V_a^+]\\
&=&T^{ab}[g(Y^-,\nabla^-_{X^+}V_b^-)-g(X^+,\nabla^+_{Y^-}V_b^+)]V_a\\
&-&T^{ab}[g(Y^-,\nabla^-_{X^+}V_b^-)+g(X^+,\nabla^+_{Y^-}V_b^+)]g^{-1}\xi_a.\end{eqnarray*}
A simple calculation shows $g(Y^-,\nabla^-_{X^+}V_b^-)+g(X^+,\nabla^+_{Y^-}V_b^+)=0$. We finally have
\begin{equation}R^a([X],[Y])=-2T^{ab}g(\nabla^-_{X^+}Y^-,V_b^-).\end{equation}

If additionally $M$ together with its structure of a metric generalized principal $G$-bundle comes as an invariant submanifold of a bigger manifold $\mathcal{M}$, which carries an isotropic trivially extended $G$-action and a $G$-invariant generalized metric, we can express the above result in terms of extensions $\breve{X}$, $\breve{Y}$ of $X^+$, $Y^-$ on $\mathcal{M}$. Let $\breve{g}$ be the metric on $\mathcal{M}$ and $\breve{\nabla}^\pm$ be Bismut connections on $\mathcal{M}$. If $M\subset \mathcal{M}$ is locally defined by $\mu^\alpha=0$ for $\alpha=1,2,\cdots, \textup{dim}\mathcal{M}-\textup{dim}M$, then by Eq.~(\ref{sbis2}), we have
\begin{equation}
R^a([X],[Y])=-2T^{ab}\breve{g}(\breve{\nabla}^-_{\breve{X}}\breve{Y}, V_b^-)|_M+2T^{ab}G_{\alpha\beta}d\mu^\alpha (\breve{\nabla}^-_{\breve{X}}\breve{Y})|_M d\mu^\beta(V_b^-)|_M,\label{rce}
\end{equation}
where $G_{\alpha\beta}$ is the inverse of $G^{\alpha\beta}=\breve{g}(d\mu^\alpha, d\mu^\beta)|_M$. This formula will be crucial in \S~\ref{ghs}.

\section{Generalized K$\ddot{a}$hler reduction}\label{GKR}
In the framework of \cite{BCG1} or \cite{Ca}, the reduction of a $G$-invariant generalized K$\ddot{a}$hler manifold $\mathcal{M}$ involves two stages: (i) a $G$-invariant submanifold $M\subset \mathcal{M}$ is singled out, possibly by the zero-level set of an equivariant map $\mu: \mathcal{M}\rightarrow \mathfrak{h}^*$, where $\mathfrak{h}^*$ is the dual of a $\mathfrak{g}$-module $\mathfrak{h}$. At the same time, the bundle $K$ over $M$, locally generated by $\{V_a+\xi_a\}$ and $\{d\mu^\alpha\}$, is constructed. Then $K^\bot$ is again defined as the orthogonal complement of $K$ in $\mathbb{T}\mathcal{M}|_M$ and one gets the important bundle $K^\mathcal{G}=K^\bot \cap \mathcal{G}(K^\bot)$ over $M$. (ii) If $\mathbb{J}_1$ preserves $K^\mathcal{G}$, i.e.,
\begin{equation}\mathbb{J}_1K^\mathcal{G}=K^\mathcal{G},\label{kcon}\end{equation}
then $K^\mathcal{G}/G$ naturally acquires two complex structures. Since $K^\mathcal{G}/G$ can be identified with $E_{red}$, these are actually almost generalized complex structures on $M_{red}:=M/G$. Integrability of these structures stems from the general reduction theory of Dirac structures.

We prefer to put things in another way: One can first realize the metric reduction from $\mathcal{M}$ to $M$. With this in place, we are in the situation of \S~\ref{sect3} and can then realize the metric reduction from $M$ to $M_{red}$. Now as before, there are two connections $\tau_\pm$ in $M$ as a metric generalized principal $G$-bundle. Then Eq.~(\ref{kcon}) simply means $J_\pm \tau_\pm=\tau_\pm$, i.e. $J_\pm$ preserve the two distributions on $M$ respectively, where $J_\pm$ are the underlying complex structures on $\mathcal{M}$.

Although integrability of the reduced generalized K$\ddot{a}$hler structure in $M_{red}$ is almost obvious from the more general viewpoint, we would like to provide another proof of this fact, which fits in well with our viewpoint towards metric reduction. This approach is a bit more complicated and indirect, but may shed some new light on generalized K$\ddot{a}$hler reduction. Note that in the following, a Courant algebroid $\mathcal{E}$ on $\mathcal{M}$ carrying an isotropic trivially extended $G$-action is understood as the basic background.
\begin{theorem}(\cite{BCG1} \cite{Ca}) Let $\mathcal{M}$ be a $G$-invariant generalized K$\ddot{a}$hler manifold and $M$ a $G$-invariant submanifold such that $J_\pm \tau_\pm=\tau_\pm$. Then the generalized K$\ddot{a}$hler structure descends to the reduced manifold $M_{red}$.
\end{theorem}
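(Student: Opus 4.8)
The plan is to produce on $M_{red}=M/G$ the bi-Hermitian data $(\tilde g,\tilde J_+,\tilde J_-,\tilde H)$ of the characterization recalled in \S\ref{bas}, working throughout with the two horizontal distributions $\tau_\pm$ and the reduced Bismut connections $\tilde\nabla^\pm$ rather than with the Dirac structures. First I would define the reduced complex structures: since $J_\pm\tau_\pm=\tau_\pm$ and all the data are $G$-invariant, the restrictions $J_\pm|_{\tau_\pm}$ descend, via the isomorphisms $q_\ast|_{\tau_\pm}\colon\tau_\pm\to TM_{red}$, to almost complex structures $\tilde J_\pm$ on $M_{red}$. Compatibility with $\tilde g$ is immediate, because $\tilde g=g|_{\tau_\pm}$ and $J_\pm$ is $g$-orthogonal, so the reduced forms $\tilde\omega_\pm:=\tilde g\tilde J_\pm$ are defined. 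The structural observation I would record at once is that, $J_\pm$ being $g$-orthogonal and preserving $\tau_\pm$, it also preserves the orthogonal complement $\tau_\pm^{\perp}=k_\pm\oplus Q$ of $\tau_\pm$ in $T\mathcal M|_M$ (here $Q$ is the normal bundle of $M\subset\mathcal M$); consequently $J_\pm$ commutes with the orthogonal projection onto $\tau_\pm$ that enters the reduction of the Bismut connections.

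Second I would establish the parallelism $\tilde\nabla^\pm\tilde J_\pm=0$. Combining the two reduction stages, namely Thm.~\ref{Bism} for $M\to M_{red}$ and Eq.~(\ref{sbis2}) for $\mathcal M\to M$, one sees that $\tilde\nabla^-_{[X]}[Y]$ is represented in $\tau_-$ by the orthogonal projection onto $\tau_-$ of $\breve\nabla^-_{X^+}Y^-$, where $\breve\nabla^-$ is the $-$-Bismut connection on $\mathcal M$. Now $J_-Y^-$ is precisely the $\tau_-$-lift of $\tilde J_-[Y]$; since $\breve\nabla^-J_-=0$ on $\mathcal M$ (the generalized K\"ahler flatness of $J_-$) and $J_-$ commutes with the projection onto $\tau_-$ by the remark above, I obtain $\tilde\nabla^-_{[X]}(\tilde J_-[Y])=\tilde J_-(\tilde\nabla^-_{[X]}[Y])$. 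The case $\tilde\nabla^+\tilde J_+=0$ is symmetric.

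The decisive step, and where I expect the genuine difficulty, is integrability of $\tilde J_\pm$. I would test involutivity of $\widetilde{T^{1,0}_-}=q_\ast(\tau_-^{1,0})$: for $G$-invariant lifts $X^-,Y^-\in\tau_-^{1,0}$, the $\tau_-$-lift of $[[X],[Y]]$ equals $[X^-,Y^-]+\Omega_-^a(X^-,Y^-)V_a$, where $\Omega_-$ is the curvature of $\tau_-$. If $\Omega_-$ is of type $(1,1)$ with respect to $\tilde J_-$, then $\Omega_-^a(X^-,Y^-)=0$ on the two $(1,0)$-vectors, so this lift is just $[X^-,Y^-]$, which therefore lies in $(\tau_-)_{\mathbb C}$; but $[X^-,Y^-]$ also lies in $T^{1,0}_-\mathcal M|_M$ because $T^{1,0}_-\mathcal M$ is involutive ($J_-$ being integrable on $\mathcal M$), whence $[X^-,Y^-]\in(\tau_-)_{\mathbb C}\cap T^{1,0}_-=\tau_-^{1,0}$ and $\tilde J_-$ is integrable. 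Everything thus hinges on proving that $\Omega_\pm$ are of type $(1,1)$. By Lemma~\ref{cur} this reduces to showing that $d\xi^\pm_b|_{\tau_\pm}$ has vanishing $(0,2)$-part, and I would extract this from $\xi^\pm_b=g(V^\pm_b,\cdot)$, the identity $\omega_\pm=gJ_\pm$, the flatness $\breve\nabla^\pm J_\pm=0$, and the generalized K\"ahler relations $d^c_\pm\omega_\pm=\mp H$ with $H$ of type $(1,2)+(2,1)$; it is precisely the totally skew character of $H$ that forces the offending $(0,2)$-terms to cancel. This $(1,1)$-property of the curvatures is the main obstacle, and it is also the fact that later endows $M$ with its biholomorphic structure.

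Finally I would assemble the pieces. With $\tilde J_\pm$ integrable and $\tilde g$-compatible and with $\tilde\nabla^\pm\tilde J_\pm=0$, the connections $\tilde\nabla^\pm=\tilde\nabla\pm\tfrac12\tilde g^{-1}\tilde H$ are metric connections with totally skew torsion $\pm\tilde H$ that preserve $\tilde J_\pm$; by the uniqueness of the Bismut connection of a Hermitian manifold they must coincide with the Bismut connections of $(M_{red},\tilde g,\tilde J_\pm)$, and hence $\tilde d^c_\pm\tilde\omega_\pm=\mp\tilde H$ (the type $(1,2)+(2,1)$ of $\tilde H$ following automatically). This is exactly the bi-Hermitian data of \S\ref{bas}, so $(\tilde J_+,\tilde J_-,\tilde g,\tilde H)$ defines a generalized K\"ahler structure on $M_{red}$, as claimed.
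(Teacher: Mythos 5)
Your proposal is correct in substance, but it assembles the conclusion along a genuinely different route from the paper, so a comparison is worth recording. The shared core: like the paper, you define $\tilde J_\pm$ by descending $J_\pm|_{\tau_\pm}$, and you prove $\tilde\nabla^\pm\tilde J_\pm=0$ by composing the two reduction formulas (Thm.~\ref{Bism} together with Eq.~(\ref{sbis2})); your device of noting that $J_\pm$ preserves $\tau_\pm^\perp=k_\pm\oplus Q$ and hence commutes with the orthogonal projection onto $\tau_\pm$ is just a cleaner repackaging of the paper's argument, which instead pairs against a third lift $Z^+$ and uses skew-adjointness of $J_+$. Both proofs also hinge on the same key lemma, namely that $\Omega_\pm$ are of type $(1,1)$, i.e.\ $d\xi^\pm_a|_{\tau_\pm}$ has no $(0,2)$-part (via Lemma~\ref{cur}). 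Where you diverge is in what you do with that lemma. The paper never proves integrability of $\tilde J_\pm$ directly: it verifies Gualtieri's two conditions --- Bismut parallelism plus $\tilde H$ of type $(2,1)+(1,2)$ --- and for the latter it needs the explicit reduced-flux formula $\tilde H=(H+\Omega_+^a\wedge\xi_a)|_{\tau_+}$ of Prop.~\ref{plus}, into which the $(1,1)$-property of $\Omega_+$ is fed; integrability of $\tilde J_\pm$ then comes for free from the bi-Hermitian characterization. You instead use the $(1,1)$-property to prove involutivity of $q_\ast(\tau_\pm^{1,0})$ directly (the vertical defect of $[X^-,Y^-]$ is exactly $-\Omega_-(X^-,Y^-)$, and the bracket is $(1,0)$ by integrability of $J_-$ upstairs), and then invoke Gauduchon's uniqueness of the Hermitian connection with totally skew torsion to force $\tilde d^c_\pm\tilde\omega_\pm=\mp\tilde H$, so that the type condition on $\tilde H$ is automatic. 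Your route buys independence from Prop.~\ref{plus} and makes the geometric content of the $(1,1)$-curvature condition (the biholomorphic structure emphasized in the paper's closing remark) explicit; the paper's route buys an explicit formula for the reduced flux, which it needs anyway elsewhere.

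The one place where your proposal stops short of a proof is precisely this shared key lemma: you only sketch why $d\xi^\pm_b|_{\tau_\pm}$ has vanishing $(0,2)$-part. Your list of ingredients is the right one --- $\xi_a^+=g(V_a^+)$, $\iota_aH=d\xi_a$, compatibility $\omega_+=\breve gJ_+$, and the relation $d^c_+\omega_++\breve H=0$ with $\breve H$ of type $(2,1)+(1,2)$ --- and the paper's computation is exactly the expansion you indicate: for $X^+,Y^+$ of type $(1,0)$ one finds $d\xi^+_a(X^+,Y^+)=(d^c_+\omega_++\breve H)|_M(X^+,Y^+,V_a^A)=0$, where $V_a^A$ is the $(0,1)$-part of $V_a$. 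But be aware that the cancellation mechanism is the identity $d^c_+\omega_+=-\breve H$ together with the type of $\breve H$, not merely the ``totally skew character'' of $H$; as phrased, that sentence would not survive scrutiny, though the computation it gestures at does.
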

\begin{proof}As the reduced generalized K$\ddot{a}$hler structure is well understood in the literature, we only pay attention to the integrability condition.

Since $TM_{red}$ is modeled on both $\tau_+$ and $\tau_-$ on $M$, thus $J_\pm \tau_\pm=\tau_\pm$ implies that $M_{red}$ has two almost complex structures $\tilde{J}_\pm$. The compatibility of $\tilde{J}_\pm$ with the reduced metric $\tilde{g}$ is also obvious. In this situation, according to \cite{Gu00}, to obtain the conclusion, we need to prove (i) $\tilde{\nabla}^{\pm}\tilde{J}_\pm=0$ and (ii) $\tilde{H}$ is of type $(2,1)+(1,2)$ w.r.t. both $\tilde{J}_\pm$. Note that here $\tilde{\nabla}^{\pm}$ are the Bismut connections on $M_{red}$ and $\tilde{H}$ is the curvature of the reduced metric splitting.

W use $\breve{X}$ to denote an extension of $X^+\in \Gamma(\tau_+)$ or $X^-\in\Gamma(\tau_-)$ to $\mathcal{M}$ and the Bismut connections in $\mathcal{M}$ are denoted by $\breve{\nabla}^\pm$. By Eq.~(\ref{bis}) and an analogue of Eq.~(\ref{sbis2}) for $\breve{\nabla}^+$, we have
\begin{eqnarray*}(\tilde{\nabla}^+_{[X]}\tilde{J}_+[Y], [Z])&=&(\nabla^+_{X^-}J_+Y^+, Z^+)=(\breve{\nabla}^+_{\breve{X}}J_+\breve{Y}, \breve{Z})|_M\\
&=&(J_+\breve{\nabla}^+_{\breve{X}}\breve{Y}, \breve{Z})|_M=-(\breve{\nabla}^+_{\breve{X}}\breve{Y}, J_+\breve{Z})|_M\\
&=&-(\nabla^+_{X^-}Y^+, J_+Z^+)=-(\tilde{\nabla}^+_{[X]}[Y],\tilde{J}_+[Z])\\
&=& (\tilde{J}_+\tilde{\nabla}^+_{[X]}[Y],[Z]),\end{eqnarray*}
where the fact $\breve{\nabla}^+ J_+=0$ is used. We thus have proved that $\tilde{\nabla}^+ \tilde{J}_+=0$. $\tilde{\nabla}^- \tilde{J}_-=0$ can be proved similarly.

To see that $\tilde{H}$ is of type $(2,1)+(1,2)$ w.r.t. $\tilde{J}_+$, since by Prop.~\ref{plus}
\[\tilde{H}([X], [Y], [Z])=(\check{H}|_M+\Omega_+^a\wedge \xi_a)(X^+,Y^+,Z^+),\]
and the curvature $\check{H}$ of the metric splitting of $\mathcal{E}$ is of type $(2,1)+(1, 2)$ w.r.t. $J_+$, we only need to prove $\Omega_+^a$ is of type $(1,1)$, namely
\[\Omega_+^a(X^+, Y^+)=0\]
if $X^+, Y^+\in (\tau_+\otimes \mathbb{C})\cap T_{1,0}^+\mathcal{M}|_M$. Note that $\Omega_+^a=K^{ba}d\xi_b^+$, we should prove $d\xi_a^+(X^+, Y^+)=0$. In fact,
\begin{eqnarray*}
d\xi_a^+(X^+, Y^+)&=&[dg(V_a)+d\xi_a](X^+, Y^+)
=X^+g(V_a,Y^+)-Y^+g(V_a, X^+)\\&-&g(V_a, [X^+, Y^+])+\breve{H}|_M(V_a, X^+, Y^+)\\
&=&\sqrt{-1}X^+\omega_+|_M(V_a^A, Y^+)-\sqrt{-1}Y^+\omega_+|_M(V_a^A, X^+)\\
&-&\sqrt{-1}\omega_+|_M(V_a^A, [X^+, Y^+])+\breve{H}|_M(V_a^A, X^+, Y^+),
\end{eqnarray*}
where $V_a^A$ is the $T_{0,1}^+\mathcal{M}$-part of $V_a$ and we have used the following two facts: (i) $\breve{g}$ and $J_+$ are compatible and $\omega_+=\breve{g}J_+$; (ii) $\breve{H}$ is of type $(2,1)+(1,2)$ w.r.t. $J_+$. Consequently we have
\begin{eqnarray*}d\xi_a^+(X^+, Y^+)&=&-\sqrt{-1}d(\omega_+|_M)(X^+, Y^+, V_a^A)+\breve{H}|_M(V_a^A, X^+, Y^+)\\
&=&-\sqrt{-1}(d\omega_+)|_M(X^+, Y^+, V_a^A)+\breve{H}|_M(X^+, Y^+, V_a^A)\\
&=&-(d\omega_+)|_M(J_+X^+, J_+Y^+, J_+V_a^A)+\breve{H}|_M(X^+, Y^+, V_a^A)\\
&=&(d_+^c\omega_++\breve{H})|_M(X^+, Y^+, V_a^A)\\
&=&0,\end{eqnarray*}
where we have used the identity $d_+^c\omega_++\breve{H}=0$ on $\mathcal{M}$. Similarly, the curvature $\Omega_-^a$ of $\tau_-$ is of type $(1,1)$ w.r.t. $\tilde{J}_-$ and $\tilde{H}$ is of type $(2,1)+(1,2)$ w.r.t. $\tilde{J}_-$.
\end{proof}
 \noindent\emph{Remark}. The proof has some byproducts. It implies that $M$ as a principal $G$-bundle over generalized K$\ddot{a}$hler manifold $M_{red}$ carries two connections $\tau_\pm$ whose curvatures are of type $(1,1)$ w.r.t. $\tilde{J}_\pm$ respectively. Thus \emph{any associated complex vector bundle $W$ naturally has a biholomorphic structure}, i.e. $W$ is holomorphic simultaneously w.r.t. both of $\tilde{J}_\pm$. Such vector bundles play a basic role in the work \cite{Hu} to find an analogue of Hermite-Einstein equations in the context of biHermitian manifolds. Thus generalized K$\ddot{a}$hler reduction actually provides examples of biholomorphic structures.
\section{Generalized holomorphic structures from generalized K$\ddot{a}$hler reduction}\label{ghs}
Generalized holomorphic vector bundles are analogues of holomorphic vector bundles in complex geometry. Due to the observation in the end of \S~\ref{GKR}, it is natural to ask whether generalized holomorphic vector bundles could arise as byproducts of generalized K$\ddot{a}$hler reduction. The goal of this section is mainly to provide some examples to give an affirmative answer to this question. We will continue to use notation in \S~\ref{GKR}. Note that $M\subset\mathcal{M}$ is actually a metric generalized principal $G$-bundle, carrying the pair $(\tau_+, \tau_-)$ of connections. Let $\tau_\pm^{0,1}$ denote the $-\sqrt{-1}$-eigensubbundles of $\tau_\pm\otimes \mathbb{C}$ w.r.t. $J_\pm$ respectively.
\begin{lemma}Any associated complex vector bundle $W$ of $M$ as a principal $G$-bundle is naturally generalized holomorphic if the relative curvature $R$ of the pair $(\tau_+, \tau_-)$ satisfies
\begin{equation}R([X],[Y])=0, \quad \forall [X]\in T_{0,1}^+M_{red}, \quad [Y]\in T_{0,1}^-M_{red}.\label{rc}\end{equation}
\end{lemma}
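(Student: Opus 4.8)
The plan is to identify the flatness of the candidate generalized holomorphic structure with the vanishing of a relative curvature, and then to feed in hypothesis~(\ref{rc}). Since the curvatures of $\tau_\pm$ are of type $(1,1)$ with respect to $\tilde J_\pm$ (the byproduct recorded in the remark after the previous theorem), the connections $\nabla^\pm$ that $\tau_\pm$ induce on $W$ make $W$ biholomorphic; writing $\bar\delta_\pm$ for the $(0,1)$-parts of $\nabla^\pm$ with respect to $\tilde J_\pm$, these are genuine $\tilde J_\pm$-holomorphic structures, so in particular $\bar\delta_+^2=\bar\delta_-^2=0$. By the criterion quoted at the end of \S\ref{bas} (following~\cite{Hu}), $D:=\bar\delta_++\bar\delta_-$ is a generalized holomorphic structure for $L_1=L_+\oplus L_-$ as soon as the mixed relation~(\ref{ghc}) holds. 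Thus everything reduces to showing that $\bar\delta_+\bar\delta_-+\bar\delta_-\bar\delta_+$ is precisely the relative curvature of the pair $(\nabla^+,\nabla^-)$ evaluated on $T^+_{0,1}M_{red}\times T^-_{0,1}M_{red}$.

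To establish this, I would compute the curvature $F_D$ of $D$, regarded as an $L_1$-connection, on a decomposable bivector $\ell_+\wedge\ell_-$ with $\ell_\pm\in L_\pm$, setting $[X]:=\pi(\ell_+)\in T^+_{0,1}M_{red}$ and $[Y]:=\pi(\ell_-)\in T^-_{0,1}M_{red}$. Because $L_+\subset V_+$ and $L_-\subset V_-$ are mutually orthogonal, $\bar\delta_+$ pairs nontrivially only with $L_+$ and $\bar\delta_-$ only with $L_-$, so the quadratic terms are $\nabla^+_{[X]}\nabla^-_{[Y]}s-\nabla^-_{[Y]}\nabla^+_{[X]}s$, while the bracket term is $\nabla_{\pi([\ell_+,\ell_-]_H)}s$. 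Here $L_1$ is a complex Dirac structure, hence isotropic and involutive, so the Dorfman bracket $[\ell_+,\ell_-]_H$ is skew and lies in $L_+\oplus L_-$; I would split the bracket term accordingly into a $\nabla^+$-piece and a $\nabla^-$-piece.

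The heart of the argument is to evaluate the two projected brackets by~(\ref{Hif}). In the metric splitting one has $\ell_+=X+g(X)$ with $X\in T^+_{0,1}M_{red}$ and $\ell_-=Y-g(Y)$ with $Y\in T^-_{0,1}M_{red}$. The $-$ case of~(\ref{Hif}) gives $\pi([\ell_+,\ell_-]_H^-)=\tilde\nabla^-_{[X]}[Y]$, while using skew-symmetry on $L_1$ together with the $+$ case of~(\ref{Hif}) applied to $[\ell_-,\ell_+]_H$ gives $\pi([\ell_+,\ell_-]_H^+)=-\tilde\nabla^+_{[Y]}[X]$. Substituting these,
\[F_D(\ell_+,\ell_-)s=\nabla^+_{[X]}\nabla^-_{[Y]}s-\nabla^-_{[Y]}\nabla^+_{[X]}s-\nabla^-_{\tilde\nabla^-_{[X]}[Y]}s+\nabla^+_{\tilde\nabla^+_{[Y]}[X]}s=R([X],[Y])s,\]
the relative curvature of the pair $(\nabla^+,\nabla^-)$ in the sense defined in \S\ref{mgp}.

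With this identity the lemma closes quickly: the $\wedge^2\bar L_\pm$ parts of $D^2$ are $\bar\delta_\pm^2=0$ by biholomorphicity, and the mixed part vanishes exactly when $R([X],[Y])=0$ for all $[X]\in T^+_{0,1}M_{red}$ and $[Y]\in T^-_{0,1}M_{red}$. Finally, the proposition relating the relative curvature of $(\nabla^+,\nabla^-)$ on $W=W_0\times_\rho P$ to that of $(\tau_+,\tau_-)$ (namely $R^a\rho_\ast(e_a)$ versus $R^aV_a$) shows that this vanishing is equivalent to hypothesis~(\ref{rc}) for the principal-bundle pair, completing the proof. I expect the main obstacle to be the sign-and-argument bookkeeping in the central step: verifying that the two $L_\pm$-projections of the Dorfman bracket produce exactly $\tilde\nabla^-_{[X]}[Y]$ and $-\tilde\nabla^+_{[Y]}[X]$, which hinges on inserting~(\ref{Hif}) in the correct slot and on the skew-symmetry that is available only because $L_1$ is isotropic.
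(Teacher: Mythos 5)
Your proposal is correct and follows essentially the same route as the paper's proof: both build the generalized connection $\mathcal{D}$ from the connections induced by $\tau_\pm$, split its $\bar{L}_1^{red}$-part as $\bar{\delta}_++\bar{\delta}_-$, and invoke the criterion of Eq.~(\ref{ghc}) from \cite{Hu}. The only difference is that you explicitly carry out, via Eq.~(\ref{Hif}) and the skew-symmetry of the Courant bracket on the isotropic involutive $L_1$, the identification of the mixed term $\bar{\delta}_+\bar{\delta}_-+\bar{\delta}_-\bar{\delta}_+$ with the relative curvature of $(\nabla^+,\nabla^-)$ (and hence, by the proposition of \S~\ref{mgp}, with Eq.~(\ref{rc})), a step the paper compresses into ``it is not hard to find.''
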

\begin{proof}Let $\nabla^{W\pm}$ be the connections in $W$ determined by $\tau_\pm$ in $M$. They can be combined to give a generalized connection $\mathcal{D}$ in $W$ in the sense of \cite{Gu1}. In fact, Since $A\in\Gamma(E_{red})$ can be uniquely written as
\[A=[X]+\tilde{g}([X])+[Y]-\tilde{g}([Y])\]
for some $[X],[Y]\in \Gamma(TM_{red})$ due to the decomposition $E_{red}=V_+^{red}/G\oplus V_-^{red}/G$, we can define
\[\mathcal{D}_A s=\nabla^{W+}_{[X]}s+\nabla^{W-}_{[Y]}s, \quad s\in \Gamma(W).\]
$\mathcal{D}$ can be additionally decomposed according to the decomposition $E_{red}\otimes \mathbb{C}=L_1^{red}\oplus \bar{L}_1^{red}$, where $L_1^{red}$ is the $\sqrt{-1}$-eigenbundle of the reduced generalized complex structure $\mathbb{J}_1^{red}$ of $\mathbb{J}_1$. Let $\bar{\partial}$ be the $\bar{L}_1^{red}$-part of $\mathcal{D}$. $\bar{\partial}$ can be further decomposed as $\bar{\partial}=\bar{\delta}_++\bar{\delta}_-$ where actually $\bar{\delta}_\pm$ are just the natural $\tilde{J}_\pm$-holomorphic structures in $W$ induced from $\nabla^{W\pm}$. Thus $\bar{\partial}$ is a generalized holomorphic structure iff Eq.~(\ref{ghc}) is satisfied. It is not hard to find this is exactly Eq.~(\ref{rc}).
\end{proof}
To provide concrete examples, we specify to the case of Hamiltonian generalized K$\ddot{a}$hler manifolds introduced by Lin and Tolman in \cite{LT}. Let $\mathcal{M}$ be a $G$-invariant generalized K$\ddot{a}$hler manifold. The extended $G$-action is called Hamiltonian if there is an equivariant map $\mu: \mathcal{M}\rightarrow \mathfrak{g}^*$ ($\mathfrak{g}^*$ carries the coadjoint action) such that
\begin{equation}\mathbb{J}_2 (V_a+\xi_a)=d\mu_a,\quad a=1,2,\cdots, \textup{dim}{\mathfrak{g}}\label{ham}\end{equation}
 where $\mu_a=\mu(e_a)$. According to Lemma 4.2 in \cite{Wang2}, in terms of the biHermitian data, Eq.~(\ref{ham}) is equivalent to
 \begin{equation}
 J_+V_a^+=J_-V_a^-=-\breve{g}^{-1}d\mu_a.\label{Ham1}
 \end{equation}
 If $G$ acts freely on $M=\mu^{-1}(0)$, then Eq.~(\ref{kcon}) naturally follows and $M_{red}=\mu^{-1}(0)/G$ carries a reduced generalized K$\ddot{a}$hler structure \cite{LT} .

Recall that we use $\breve{X}\in \Gamma(T\mathcal{M}\otimes \mathbb{C})$ to denote an extension of vector field $X^+\in \Gamma(\tau_+\otimes \mathbb{C})$ or $X^-\in\Gamma(\tau_-\otimes \mathbb{C})$ on $M$.
\begin{theorem}\label{hrd}
Assume the extended $G$-action on generalized K$\ddot{a}$hler manifold $\mathcal{M}$ is Hamiltonian. Then Eq.~(\ref{rc}) is satisfied if
\begin{equation}
d\mu(\breve{\nabla}^-_{\breve{X}}\breve{Y})|_M=0, \label{hrc}
\end{equation}
for any $X^+\in \Gamma(\tau_+^{0,1})$ and $Y^-\in \Gamma(\tau_-^{0,1})$.
\end{theorem}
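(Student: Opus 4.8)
The plan is to reduce the desired identity (\ref{rc}) to the hypothesis (\ref{hrc}) by invoking the explicit formula (\ref{rce}) for the relative curvature of an invariant submanifold inside $\mathcal{M}$. Recall that (\ref{rce}) expresses $R^a([X],[Y])$ as a sum of two terms: the bulk term $-2T^{ab}\breve{g}(\breve{\nabla}^-_{\breve{X}}\breve{Y}, V_b^-)|_M$ and the normal correction $2T^{ab}G_{\alpha\beta}\,d\mu^\alpha(\breve{\nabla}^-_{\breve{X}}\breve{Y})|_M\,d\mu^\beta(V_b^-)|_M$. The strategy is to show that under the Hamiltonian hypothesis (\ref{Ham1}), and restricting to the eigenspaces in (\ref{rc}) where $X^+\in\Gamma(\tau_+^{0,1})$ and $Y^-\in\Gamma(\tau_-^{0,1})$, \emph{both} terms vanish precisely when (\ref{hrc}) holds.

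First I would dispose of the normal correction term. By the defining identity (\ref{Ham1}) for the Hamiltonian action, $V_b^\pm$ is tied to $d\mu_b$ via $J_\pm V_b^\pm=-\breve{g}^{-1}d\mu_b$; in particular the functions $\mu^\alpha$ here are the components $\mu_b$, so $d\mu^\beta(V_b^-)|_M$ is controlled by the pairing of $V_b^-$ with $J_-V_b^-$ and is a fixed (and in general nonzero) quantity. The crucial point is that the whole normal term carries the factor $d\mu^\alpha(\breve{\nabla}^-_{\breve{X}}\breve{Y})|_M$, which is exactly what hypothesis (\ref{hrc}) forces to be zero. Hence the normal correction drops out directly under (\ref{hrc}), and no further argument about that piece is needed.

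Next I would handle the bulk term $-2T^{ab}\breve{g}(\breve{\nabla}^-_{\breve{X}}\breve{Y}, V_b^-)|_M$. The aim is to rewrite the pairing $\breve{g}(\breve{\nabla}^-_{\breve{X}}\breve{Y}, V_b^-)$ so that it too becomes a contraction of $d\mu_b$ against $\breve{\nabla}^-_{\breve{X}}\breve{Y}$. Using $\breve{g}(\cdot, V_b^-)=\breve{g}(J_-\cdot, J_-V_b^-)$ by compatibility of $J_-$ with $\breve{g}$, together with (\ref{Ham1}) giving $J_-V_b^-=-\breve{g}^{-1}d\mu_b$, one converts this into $-d\mu_b(J_-\breve{\nabla}^-_{\breve{X}}\breve{Y})$. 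Since $\breve{\nabla}^-$ is the $-$-Bismut connection and $J_-$ is flat w.r.t. $\breve{\nabla}^-$ (i.e. $\breve{\nabla}^- J_-=0$), the operator $J_-$ commutes past the connection; moreover $\breve{Y}$ extends $Y^-\in\tau_-^{0,1}$, an eigenvector of $J_-$ with eigenvalue $-\sqrt{-1}$, so $J_-\breve{\nabla}^-_{\breve{X}}\breve{Y}=\breve{\nabla}^-_{\breve{X}}(J_-\breve{Y})=-\sqrt{-1}\,\breve{\nabla}^-_{\breve{X}}\breve{Y}$ on $M$. This reduces the bulk pairing to a nonzero multiple of $d\mu_b(\breve{\nabla}^-_{\breve{X}}\breve{Y})|_M$, which again vanishes exactly by (\ref{hrc}).

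The main obstacle I anticipate is the bookkeeping in the bulk term: one must be careful that the identities $\breve{\nabla}^- J_-=0$ and $J_-$-eigenvalue of $\breve{Y}$ are applied to an \emph{extension} $\breve{Y}$ and that the equalities only need hold after restriction to $M$, so that the replacement $J_-\breve{Y}=-\sqrt{-1}\breve{Y}$ and the flatness of $J_-$ are legitimate along $M$ despite $\breve{Y}$ being an arbitrary extension. Care is also needed to confirm that the summation over the normal directions $\alpha$ in (\ref{rce}) is indeed indexed by the same $\mu_b$ appearing in (\ref{Ham1}), so that vanishing of $d\mu(\breve{\nabla}^-_{\breve{X}}\breve{Y})|_M$ in (\ref{hrc}) covers every component entering both terms. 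Once these matchings are verified, both summands of (\ref{rce}) are visibly proportional to $d\mu(\breve{\nabla}^-_{\breve{X}}\breve{Y})|_M$, and the theorem follows immediately.
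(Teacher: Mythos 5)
Your proposal is correct and follows essentially the same route as the paper: both arguments substitute into Eq.~(\ref{rce}), use the Hamiltonian identity (\ref{Ham1}) together with $\breve{\nabla}^-J_-=0$ to convert the pairing $\breve{g}(\breve{\nabla}^-_{\breve{X}}\breve{Y},V_b^-)$ into a multiple of $d\mu_b(\breve{\nabla}^-_{\breve{X}}\breve{Y})$, and then observe that both terms of (\ref{rce}) vanish under (\ref{hrc}). The only cosmetic difference is that the paper normalizes the extensions to lie in $\Gamma(T_{0,1}^\pm\mathcal{M})$ at the outset (invoking independence of the choice of extension), whereas you keep arbitrary extensions and justify the eigenvalue replacement along $M$ by tangency of $\breve{X}$ to $M$ --- the same point handled from the opposite direction.
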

\noindent\emph{Remark}. Eq.~(\ref{hrc}) means $\breve{\nabla}^-_{\breve{X}}\breve{Y}$ should be tangent to $M$.
\begin{proof}
As the result is obviously independent of which extensions we choose, we can safely assume that $\breve{X}\in \Gamma(T_{0,1}^+\mathcal{M})$ and $\breve{Y}\in \Gamma(T_{0,1}^-\mathcal{M})$.

Due to Eq.~(\ref{Ham1}), for any $Z\in \Gamma(T_{0,1}^-\mathcal{M})$ we have \[\breve{g}(Z, V_a^-)=\sqrt{-1}d\mu_a(Z).\]
Since $\breve{\nabla}^-J_-=0$, we have $\breve{\nabla}^-_{\breve{X}}\breve{Y}\in \Gamma(T_{0,1}^-\mathcal{M})$ and thus
\[\breve{g}(\breve{\nabla}^-_{\breve{X}}\breve{Y},V_a^-)=\sqrt{-1}d\mu_a(\breve{\nabla}^-_{\breve{X}}\breve{Y}).\]
Substituting this result in Eq.~(\ref{rce}), we find $R^a([X],[Y])$ is of the form
\[R^a([X],[Y])=-2\sqrt{-1}T^{ab}(\delta_b^c+\sqrt{-1}S_b^c)d\mu_c(\breve{\nabla}^-_{\breve{X}}\breve{Y})|_M,\]
for some real-valued functions $S_b^c$ on $M$. The conclusion immediately follows.
\end{proof}

We are now in a position to construct some generalized holomorphic bundles by generalized K$\ddot{a}$hler reduction. Here we content ourselves with a family of generalized holomorphic line bundles on $\mathbb{C}P^2$ equipped with certain non-trivial generalized K$\ddot{a}$hler structures. We follow the ideas of \cite{Gu00} \cite{LT} to deform the standard K$\ddot{a}$hler structure on $\mathbb{C}^3$ (as a generalized K$\ddot{a}$hler manifold) to new $S^1$-invariant generalized K$\ddot{a}$hler structures while keeping the standard symplectic structure fixed. Applying the Marsden-Weinstein reduction to the symplectic structure then gives non-trivial generalized K$\ddot{a}$hler structures on $\mathbb{C}P^2$. By choosing the deformations \emph{properly}, we would obtain metric generalized principal $S^1$-bundles over $\mathbb{C}P^2$ whose associated line bundles are generalized holomorphic.

Let us recall the deformation theory of \cite{Gu00} in some detail. Given a generalized K$\ddot{a}$hler structure on $\mathcal{M}$ and $\epsilon\in \Gamma(\wedge^2 \bar{L}_1)$, define $L_1^\epsilon=\{X+\iota_X\epsilon|X\in L_1\}$. For $\epsilon$ small enough, $L_1^\epsilon$ is an almost generalized complex structure. The integrability condition of this deformation is the Maurer-Cartan equation
\begin{equation}d_{L_1}\epsilon+\frac{1}{2}[\epsilon, \epsilon]_S=0,\label{MC}\end{equation}
where $[\cdot, \cdot]_S$ is the Schouten bracket induced from the Lie algebroid $L_1$. Note that since $L_1=L_+\oplus L_-$ and $L_2=L_+\oplus \bar{L}_-$, to keep $L_2$ fixed we should take $\epsilon\in\Gamma(\bar{L}_+\otimes \bar{L}_-)$.

\textbf{The standard K$\ddot{a}$hler structure on $\mathbb{C}^3$.}
Let $\mathcal{M}=\mathbb{C}^3$ with its canonical complex structure $J$ and K$\ddot{a}$hler structure $\omega=\sqrt{-1}\sum_{i=0}^3 dz_i\wedge d\bar{z}_i$. Let $E_i=\partial_{z_i}+d\bar{z}_i$, $F_i=\partial_{z_i}-d\bar{z}_i$, $i=0, 1, 2$. We have two complex vector bundles $L_+=\textup{span}\{\bar{E}_i\}$ and $L_-=\textup{span}\{\bar{F}_i\}$ over $\mathcal{M}$. Viewed as a generalized K$\ddot{a}$hler manifold, $\mathcal{M}$ has $L_1=L_+\oplus L_-$ and $L_2=L_+\oplus \bar{L}_-$ as the associated two generalized complex structures. A pure spinor of $L_1$ is $\varphi_1=dz_0dz_1dz_2$ and a pure spinor of $L_2$ is $\varphi_2=e^{-\sqrt{-1}\omega}$. Note that in the present setting $d_{L_1}$ is just the classical Dolbeault operator associated to $J$ and from the biHermitian viewpoint, we have chosen $J_+=J_-=J$.

$S^1$ acts on $\mathcal{M}$ by scaling:
\[e^{i\theta}\cdot (z_0,z_1,z_2)=(e^{i\theta}z_0,e^{i\theta}z_1, e^{i\theta}z_2 ).\] The infinitesimal action of $S^1$ is generated by the vector field
\[\partial_\theta=\sqrt{-1}\sum_{i=0}^2 (z_i\partial_{z_i}-\bar{z}_i\partial_{\bar{z}_i}).\]
Then $\mu=\sum_{i=0}^2 |z_i|^2-1$ is a moment map. By K$\ddot{a}$hler reduction, $\mu^{-1}(0)/S^1$ is a K$\ddot{a}$hler manifold. This is the canonical K$\ddot{a}$hler structure on the projective plane $\mathbb{C}P^2$.

\textbf{Deformations of the K$\ddot{a}$hler structure.}
 We choose \[\epsilon=\frac{1}{2}(\sum_{i=0}^2f_iE_i)\wedge (\sum_{j=0}^2 g_jF_j),\] where $f_i, g_i$ are homogeneous polynomials of $z_0, z_1$ and $z_2$ to be determined. Since $f_i, g_i$ are holomorphic, the integrability condition (\ref{MC}) accounts to the following equations:
\begin{equation}\left \{
\begin{array}{ll}
\sum_{p=0}^2f_p(g_k\partial_{z_p}g_q-g_q\partial_{z_p}g_k)=0,\\
\sum_{p=0}^2g_p(f_k\partial_{z_p}f_q-f_q\partial_{z_p}f_k)=0.\end{array}
 \right.
 \label{MC1}\end{equation}
 There are many solutions to these equations. We list two as follows: (i) $g_0=g_1=f_1=f_2=0$, $g_2=1$ and $f_0=z_0^2$; (ii) $g_i=1$, $i=0,1, 2$ and \begin{equation}f_0=(z_1-z_0)(z_2-z_0),\quad f_1=(z_0-z_1)(z_2-z_1),\quad f_2=(z_0-z_2)(z_1-z_2).\label{sol}\end{equation}
 Note that deformations associated to the two solutions are both $S^1$-invariant. Since we are only concerned with the behavior of $\epsilon$ over $M=\mu^{-1}(0)=S^5$, we can simply multiply $\epsilon$ by a nonzero complex number $\lambda$ such that $|\lambda|$ is small enough and then $L_1^\epsilon$ and $L_2$ together define a generalized K$\ddot{a}$hler structure on a bounded neighbourhood of $M$ in $\mathbb{C}^3$.

 In the following, for simplicity, we will set $g_0=g_1=g_2=1$. In this case, the first equation of (\ref{MC1}) holds trivially and the second equation can be written in a more compact form:
 \[\sum_{p=0}^2\partial_{z_p}(\frac{f_q}{f_k})=0.\]
 Thus we can choose $f_i$ to be functions of $z_1-z_0$ and $z_2-z_0$, e.g. our solution (ii) is such a choice. To make $\epsilon$ be $S^1$-invariant, we additionally require $f_i$ to be of degree 2. Now we have
 \[L_+^\epsilon=\textup{span}\{\bar{E}_i+f_i\sum_{p=0}^2 F_p\},\quad L_-^\epsilon=\textup{span}\{\bar{F}_i+\sum_{p=0}^2f_pE_p\}, \]
 and $L_1^\epsilon=L_+^\epsilon\oplus L_-^\epsilon$, $L_2=L_+^\epsilon\oplus \bar{L}_-^\epsilon$. Let $J_\pm^\epsilon$ be the underlying complex structures. Accordingly,
 \[T_{0,1}^+\mathcal{M}=\textup{span}\{\partial_{\bar{z}_i}+f_i\sum_{p=0}^2\partial_{z_p}\},\quad T_{0,1}^-\mathcal{M}=\textup{span}\{\partial_{\bar{z}_i}+\sum_{p=0}^2f_p\partial_{z_p}\}.\]

 It should be pointed out that generally with the above form of $L_1^\epsilon$ and $L_2$, we are not in the metric splitting, but this won't bother us much. Though Eq.~(\ref{Hif}) is written in the metric splitting, other splittings are equally fine, because $B$-transforms won't affect the tangent part of the equation, which is essential to obtain Bismut connections.

\textbf{The reduced generalized K$\ddot{a}$hler structure.}
We content ourselves with a glance at the reduced generalized K$\ddot{a}$hler structure, for we are more interested in the generalized holomorphic line bundles produced by the reduction procedure. The reduced generalized K$\ddot{a}$hler structure can be described conveniently in terms of pure spinors. We refer the interested reader to \cite{Du} for a detailed account of pure spinors in the setting of generalized reduction.

A pure spinor of $L_1^\epsilon$ is $\varphi_1^\epsilon=e^{-\epsilon}\cdot \varphi_1$. To find a pure spinor for its reduction $\tilde{\mathbb{J}}_1^\epsilon$ on $\mathbb{C}P^2$, roughly speaking, one simply pulls back $\varphi_1^\epsilon$ to $M=S^5$ and then pushes it forward to the quotient $M/S^1$ (i.e. contraction with $\partial_\theta$ on $M$). The two stages can be exchanged. Thus one first contracts $\varphi_1^\epsilon$ with $\partial_\theta$ on $\mathcal{M}$ and then pulls back the result to $S^5$. By 'roughly', we mean actually before pulled back, $\iota_{\partial_\theta}\varphi_1^\epsilon$ should be scaled to be $S^1$-invariant. The scaling cannot be realized globally, and when it breaks down, type-jumping of $\tilde{\mathbb{J}}_1^\epsilon$ occurs. A generic point on $\mathbb{C}P^2$ is of type 0 for $\tilde{\mathbb{J}}_1^\epsilon$, and for a point on the type-jumping locus, the type jumps to 2. A similar and more detailed analysis of some reduced generalized K$\ddot{a}$hler structures on $\mathbb{C}P^2$ can be found in \cite{BCG1}. Our argument above is along the same line of \cite{BCG1}.

 To determine the type-jumping locus of $\tilde{\mathbb{J}}_1^\epsilon$, we take the 0-form component of $\iota_{\partial_\theta}\varphi_1^\epsilon$. This produces a section $\rho$ of the dual bundle of the pure spinor line bundle of $\tilde{\mathbb{J}}_1^\epsilon$. The zero-locus of $\rho$ is precisely the type-jumping locus, which is singled out by the following homogeneous equation of degree 3:
\[z_0(f_1-f_2)+z_1(f_2-f_0)+z_2(f_0-f_1)=0.\]
This can also be found using the type formula in \cite{LT}. For example, if we choose the solution (\ref{sol}) the type-jumping locus consists of three lines in $\mathbb{C}P^2$: $z_0=z_1$, $z_0=z_2$ and $z_1=z_2$ with a joint-point $[1:1:1]$. The type-jumping locus can be viewed as a degenerate elliptic curve.

 \textbf{Generalized holomorphic structures from reduction.} We now check that $M=S^5$ as a metric generalized principal $S^1$-bundle really gives its associated line bundles a generalized holomorphic structure, i.e., Eq.~(\ref{hrc}) does hold for the present setting.

 Let us find $\tau_\pm^{0,1}$ first. Due to the proof of Thm.~\ref{hrd}, we only need to find $Z$ in $T_{0,1}^\pm\mathcal{M}$ such that $d\mu(Z)=0$. Denote $z=z_0+z_1+z_2$ and $F=F_0+F_1+F_2$. The image of $g$ on $\tau_+^{0,1}$ is spanned by
 \[A_1:=-(z_1+f_1\bar{z})(\bar{E}_0+f_0F)+(z_0+f_0\bar{z})(\bar{E}_1+f_1F)\]
 and
 \[A_2:=-(z_2+f_2\bar{z})(\bar{E}_0+f_0F)+(z_0+f_0\bar{z})(\bar{E}_2+f_2F)\]
 on $S^5$. Denote $h=f_0\bar{z}_0+f_1\bar{z}_1+f_2\bar{z}_2$ and $C=f_0E_0+f_1E_1+f_2E_2$. The image of $-g$ on $\tau_-^{0,1}$ is spanned by
  \[B_1:=-(z_1+h)(\bar{F}_0+C)+(z_0+h)(\bar{F}_1+C)\]
  and
  \[B_2:=-(z_2+h)(\bar{F}_0+C)+(z_0+h)(\bar{F}_2+C)\]
  on $S^5$. By abuse of notation, we also use $A_i$, $B_i$ to denote their extensions on $\mathcal{M}$ with the same expressions.

  What left is to check that the tangent part of $[A_i, B_j]^-$ is tangent to $M$ for $i, j=1,2$ according to Thm.~\ref{hrd} and Eq.~(\ref{Hif}). We will only compute $[A_1, B_1]^-$ and $[A_2, B_1]^-$ and the details are included in the appendix. The computation of $[A_1, B_2]^-$ and $[A_2, B_2]^-$ is similar and left to the interested reader.

  The tangent part of $[A_1, B_1]^-$ is\footnote{Note that to obtain the expression, the equation $\sum_{p=0}^2\partial_{z_p} f_i=0$ is used.}
  \begin{eqnarray*}
  &\quad&(z_1+f_1\bar{z})(z_1-z_0)\sum_{q=0}^2\partial_{z_q}f_0\partial_{\bar{z}_q}+2f_0(z_1+f_1\bar{z})(\partial_{\bar{z}_0}-\partial_{\bar{z}_1})\\
  &+&(z_0+f_0\bar{z})(z_0-z_1)\sum_{q=0}^2\partial_{z_q}f_1\partial_{\bar{z}_q}+2f_1(z_0+f_0\bar{z})(\partial_{\bar{z}_1}-\partial_{\bar{z}_0}).\\
    \end{eqnarray*}

  Its contraction with $d\mu$ is, up a common factor $z_1-z_0$,
  \begin{eqnarray*}
  &\quad&(z_1+f_1\bar{z})\sum_{q=0}^2z_q\partial_{z_q}f_0-2f_0(z_1+f_1\bar{z})\\&-&(z_0+f_0\bar{z})\sum_{q=0}^2z_q\partial_{z_q}f_1
  +2f_1(z_0+f_0\bar{z})\\&=&(z_1+f_1\bar{z})(\sum_{q=0}^2z_q\partial_{z_q}f_0-2f_0)-(z_0+f_0\bar{z})(\sum_{q=0}^2z_q\partial_{z_q}f_1-2f_1)\\
  &=&0,
  \end{eqnarray*}
  where the last equality is due to the fact that $f_0$ and $f_1$ are homogeneous functions of degree 2.

  Similarly, the tangent part of $[A_2, B_1]^-$ is
  \begin{eqnarray*}&\quad&(z_2+f_2\bar{z})(z_1-z_0)\sum_{q=0}^2\partial_{z_q}f_0\partial_{\bar{z}_q}+2f_0(z_2+f_2\bar{z})(\partial_{\bar{z}_0}-\partial_{\bar{z}_1})\\
  &-&(z_0+f_0\bar{z})(z_1-z_0)\sum_{q=0}^2\partial_{z_q}f_2\partial_{\bar{z}_q}+2f_2(z_0+f_0\bar{z})(\partial_{\bar{z}_1}-\partial_{\bar{z}_0}).
 \end{eqnarray*}
 Its contraction with $d\mu$ is, up to a common factor $z_1-z_0$,
 \begin{eqnarray*}
 &\quad&(z_2+f_2\bar{z})\sum_{q=0}^2z_q\partial_{z_q}f_0-2f_0(z_2+f_2\bar{z})\\ &-&(z_0+f_0\bar{z})\sum_{q=0}^2z_q\partial_{z_q}f_2+2f_2(z_0+f_0\bar{z})\\
 &=&(z_2+f_2\bar{z})(\sum_{q=0}^2z_q\partial_{z_q}f_0-2f_0)-(z_0+f_0\bar{z})(\sum_{q=0}^2z_q\partial_{z_q}f_2-2f_2)\\
 &=&0,
 \end{eqnarray*}
 where the last equality is due to the fact that $f_0$ and $f_2$ are homogeneous functions of degree 2.

A similar computation shows the tangent parts of $[A_1, B_2]^-$ and $[A_2, B_2]^-$ are each tangent to $M$. Therefore, we have checked that any associated line bundle of $M$ as a principal $S^1$-bundle is generalized holomorphic.

The associated line bundle of the canonical representation of $S^1$ is actually the tautological line bundle of $\mathbb{C}P^2$, and thus its first Chern class is $-[l]$, where $[l]$ denotes the homology class represented by a line in $\mathbb{C}P^2$. But the first Chern class of the canonical line bundle of $\tilde{\mathbb{J}}_1^\epsilon$ is $-3[l]$. Therefore our construction really gives rise to new generalized holomorphic line bundles.

It is expected that the approach illustrated here could also be applied to construct generalized holomorphic vector bundles of higher rank. We will turn to this elsewhere in the future.
\section{Appendix}
This appendix contains the detailed computation of $[A_1, B_1]^-$ and $[A_2, B_1]^-$.

First a direct computation gives the following formula to be used later:
\[[\bar{E}_i+f_iF, \bar{F}_j+C]=\sum_{q=0}^2\partial_{z_q}f_i(\bar{F}_q+C)-\sum_{q=0}^2\partial_{z_q}f_i(\bar{E}_q+f_qF).\]
Note that the result is independent of $j$.

\begin{eqnarray*}
[A_1, B_1]^-&=&[(z_1+f_1\bar{z})(\bar{E}_0+f_0F), (z_1+h)(\bar{F}_0+C)]^-\\
&-&[(z_1+f_1\bar{z})(\bar{E}_0+f_0F), (z_0+h)(\bar{F}_1+C)]^-\\
&-&[(z_0+f_0\bar{z})(\bar{E}_1+f_1F), (z_1+h)(\bar{F}_0+C)]^-\\
&+&[(z_0+f_0\bar{z})(\bar{E}_1+f_1F), (z_0+h)(\bar{F}_1+C)]^-\\
&=&(z_1+f_1\bar{z})(z_1+h)\Sigma_{q=0}^2\partial_{z_q}f_0(\bar{F}_q+C)\\
&+&2f_0(z_1+f_1\bar{z})(\bar{F}_0+C)-2f_0(z_1+f_1\bar{z})(\bar{F}_1+C)\\
&-&(z_1+f_1\bar{z})(z_0+h)\Sigma_{q=0}^2\partial_{z_q}f_0(\bar{F}_q+C)\\
&-&(z_0+f_0\bar{z})(z_1+h)\Sigma_{q=0}^2\partial_{z_q}f_1(\bar{F}_q+C)\\
&-&2f_1(z_0+f_0\bar{z})(\bar{F}_0+C)+2f_1(z_0+f_0\bar{z})(\bar{F}_1+C)\\
&+&(z_0+f_0\bar{z})(z_0+h)\Sigma_{q=0}^2\partial_{z_q}f_1(\bar{F}_q+C)\\
&=&(z_1+f_1\bar{z})(z_1-z_0)\Sigma_{q=0}^2\partial_{z_q}f_0\bar{F}_q\\
&+&2f_0(z_1+f_1\bar{z})(\bar{F}_0-\bar{F}_1)\\
&+&(z_0+f_0\bar{z})(z_0-z_1)\Sigma_{q=0}^2\partial_{z_q}f_1\bar{F}_q\\
&+&2f_1(z_0+f_0\bar{z})(\bar{F}_1-\bar{F}_0).
\end{eqnarray*}
Similarly,
\begin{eqnarray*}
[A_2, B_1]^-&=&[(z_2+f_2\bar{z})(\bar{E}_0+f_0F), (z_1+h)(\bar{F}_0+C)]^-\\
&-&[(z_2+f_2\bar{z})(\bar{E}_0+f_0F), (z_0+h)(\bar{F}_1+C)]^-\\
&-&[(z_0+f_0\bar{z})(\bar{E}_2+f_2F), (z_1+h)(\bar{F}_0+C)]^-\\
&+&[(z_0+f_0\bar{z})(\bar{E}_2+f_2F), (z_0+h)(\bar{F}_1+C)]^-\\
&=&(z_2+f_2\bar{z})(z_1+h)\Sigma_{q=0}^2\partial_{z_q}f_0(\bar{F}_q+C)\\
&+&2f_0(z_2+f_2\bar{z})(\bar{F}_0+C)-2f_0(z_2+f_2\bar{z})(\bar{F}_1+C)\\
&-&(z_2+f_2\bar{z})(z_0+h)\Sigma_{q=0}^2\partial_{z_q}f_0(\bar{F}_q+C)\\
&-&(z_0+f_0\bar{z})(z_1+h)\Sigma_{q=0}^2\partial_{z_q}f_2(\bar{F}_q+C)\\
&-&2f_2(z_0+f_0\bar{z})(\bar{F}_0+C)+2f_2(z_0+f_0\bar{z})(\bar{F}_1+C)\\
&+&(z_0+f_0\bar{z})(z_0+h)\Sigma_{q=0}^2\partial_{z_q}f_2(\bar{F}_q+C)\\
&=&(z_2+f_2\bar{z})(z_1-z_0)\Sigma_{q=0}^2\partial_{z_q}f_0\bar{F}_q\\
&+&2f_0(z_2+f_2\bar{z})(\bar{F}_0-\bar{F}_1)\\
&+&(z_0+f_0\bar{z})(z_0-z_1)\Sigma_{q=0}^2\partial_{z_q}f_2\bar{F}_q\\
&+&2f_2(z_0+f_0\bar{z})(\bar{F}_1-\bar{F}_0).
\end{eqnarray*}

\section*{Acknowledgemencts}
 This study is supported by the Natural Science Foundation of Jiangsu Province (BK20150797).
\bibliographystyle{elsarticle-num}
\bibliography{<your-bib-database>}



\end{document}